\newtheorem*{claim*}{Claim}
\newtheorem{theorem}{Theorem}[section]
\newtheorem{lemma}{Lemma}[section]
\newtheorem{proposition}{Proposition}[section]
\newtheorem{remark}{Remark}[section]
\newtheorem{definition}{Definition}[section]
\numberwithin{equation}{section}
\title{Decay estimates and blow up of solutions to a class of heat equations}
\author{Joydev Halder\thanks{halderjoydev@gmail.com}\ }
\author{Suman Kumar Tumuluri \thanks{suman.hcu@uohyd.ac.in}}
\affil{School of Mathematics and Statistics, University of Hyderabad, Hyderabad, India.}
\date{\today}
\begin{document}
\maketitle

\begin{abstract}
In this article, we study a semi-linear heat equation  with the nonlinearity which is the product of polynomial and logarithmic functions. Using the invariance of the potential well(s), we have established the global existence and exponential decay estimates of solutions in $L^2$ - norm  without having any restriction on the exponent in the source term under suitable conditions on the initial data. Moreover, finite time blow up of solutions at subcritical, critical and supercritical initial energy levels is also discussed.
\end{abstract}

\section{Introduction}\label{sec:introduction}
Partial differential equations with polynomial and logarithmic nonlinearities are studied widely due to many applications in physics and other applied sciences such as  transport and diffusion phenomena, 
 nuclear physics, and theory of superfluidity etc.,  (see \cite{chen2012_3, chen2015_258,gazzola2005_18,chao2016_437,lian2020_40,yacheng2006_64,zloshchastiev2010_16}). In this article, we study the following initial value problem of a semilinear heat equation with the nonlinearity which is the product of logarithmic and  polynomial functions:
\begin{equation}\label{main}
	\left\{
	\begin{aligned}
		&v_t-\Delta v=v|v|^{p-1}\log|v|, &&x \in {U}, t>0,
		\\
		&v(x,t)=0, &&x \in \partial {U}, t>0,
		\\
		&v(x,0)=v_0(x), && x \in {U},
	\end{aligned}
	\right.
\end{equation}
where ${U} \subset \mathbb{R}^n (n \geq 1)$, is a smooth domain and $v_0 \in H_0^1({U})$.\\
In the literature, many authors investigated the global existence and blow up phenomena of the solutions to the equations of the following form
\begin{equation}\label{parabolic_equation}
	\left\{
	\begin{aligned}
		&v_t-\Delta v=f(v), &&x \in {U}, t>0,
		\\
		&v(x,t)=0, &&x \in \partial {U}, t>0,
		\\
		&v(x,0)=v_0(x), && x \in {U},
	\end{aligned}
	\right.
\end{equation}
for different choices of $f$ using various methods (see for instance  \cite{ambrosetti1973_14,hoshino1991_34,stetter1973_23,tsutsumi1972_17,weissler1979_32} and the references therein). The ``potential well method", developed by Sattinger, is a powerful technique for studying the global existence and finite time blow up of solutions to \eqref{parabolic_equation} (see \cite{ payne1975_22,sattinger1968_30}).
% A powerful technique to study the global existence and finite time blow up of solution to \eqref{parabolic_equation} is the ``potential well method'', due to Sattinger (see \cite{ payne1975_22,sattinger1968_30}). 
Payne and Sattinger's work \cite{payne1975_22} is the most influential one which is followed by many mathematicians. In \cite{payne1975_22}, the authors  introduced the potential well $W$, the outer potential well $V$, and the depth $\delta$ of the potential well in terms of a Nihari functional $I(v(\cdot, t))$ and the energy functional $J(v(\cdot, t))$ associated to the solution $v(x,t)$ of the PDE that was considered. Using the invariance of the potential well,  they established the finite time blow up of the solutions to \eqref{parabolic_equation}. After that, the method of potential well was improved by numerous authors to analyze different types of PDEs such as nonlinear heat equations, pseudo parabolic equations and nonlinear wave equations (see \cite{chen2012_3,chen2015_422,chen2015_258,han2019_474,han2019_164,lian2020_40,liu2020_28,yacheng2003_192,yacheng2006_64}). Particularly,   Yacheng introduced the notion of a family of the potential wells in \cite{yacheng2006_64}, in which the potential well $W$ is a member. Furthermore, the authors of \cite{gazzola2005_18} studied  problem \eqref{parabolic_equation}, when $f(v)=v|v|^{p-1}$ and used the properties of the family of potential wells to prove the global existence and finite time blow-up of  solutions. Moreover,  the authors of \cite{chen2012_3} investigated the existence of global  solutions with an exponential decay to a class of degenerate parabolic equations with the source term $f(v)=v|v|^{p-1}$. Furthermore, they showed that the solutions  exhibit finite time blow up under  suitable conditions.  In \cite{chen2015_422}, the authors considered  \eqref{parabolic_equation} with $f(v)=v\log|v|$. Using the an appropriate family of potential wells and the logarithmic Sobolev inequality, they established the  global existence  and blow up at $t=+\infty$.  

On the other hand, many authors studied the nonlinear wave equation using the potential wells (see \cite{lian2020_40,liu2020_28,payne1975_22,yacheng2003_192,yacheng2006_64}). In particular, authors of \cite{yacheng2006_64} established the global existence of the solution to nonlinear wave equation under some assumption on initial energy and source term $f(v)$. The authors of \cite{lian2020_40} recently studied the nonlinear wave equation with source term $f(v)=|v|^p \log |v|$. %They proved the global existence and the finite time blow up of the solutions when the initial energy is subcritical, critical and supercritical.
They showed the existence of global solutions  at subcritical and critical initial enegry levels. They also established that solutions exhibit finite time blow up property for all the enegy levels. Moreover, many authors used this method to show existence of global solutions and finite time blow up of solutions in the context of semilinear pseudo-parabolic equations with nonlinear source terms  (see \cite{chen2015_258,nhan2017_73,xu2013_264}).
 %Moreover, in the context of semilinear pseudo-parabolic equation with nonlinear source term, many authors used this method to prove the global existence and blow up in finite time (see \cite{chen2015_258,nhan2017_73,xu2013_264}).

In \cite{han2019_164,le2017_151}, the authors considered a class of  $p$-Laplacian heat equations on a bounded domain ${U} \subset \mathbb{R}^n\ (n \geq 2)$ with source term $f(v)=v|v|^{p-2}\log|v|,\  p \in (2,\infty)$. The authors of \cite{le2017_151} proved the  global existence and   blow up  in finite time  of solution to \eqref{parabolic_equation} under the condition $J(v_0)< \delta$. In \cite{han2019_164}, the authors established that solutions  blow up in  finite time  for large values of $J(v_0)$. Recently in \cite{peng2021_100}, the authors considered \eqref{main} to investigate the global existence and blow up phenomena of its solutions. They indeed obtained the global existence and shown blow up with a severe restriction on the exponent $p$ in \eqref{main}. For instance, their results are not applicable when the source term in \eqref{main} is $v|v|^4 \log |v|$ and $n \geq 3$. In the present article, we have proved the global existence of solutions and shown that the solution exhibits finite time blow up without such restriction on $p$.   Moreover, we have obtained  the decay estimates of the $L^2$ - norm of the global solutions  without having any restriction on $p$ and the dimension $n$. Furthermore, we have estimated the $H_0^1$-norm of the solution with some condition on $p$ and $n$. The methods to prove the decay estimates of $H^1_0$ norm of the solution that are discussed in detail in our paper can be extended to pseudo parabolic equations also (see \cite{halder_pseudo}). 
%However, when $f(v)=v|v|^{p-1}\log |v|$, as far as we know, there is no results on this aspect of the global existence and blow up for the problem \eqref{main}.

This paper is organized as follows. Preliminaries, as well as the modified family of potential wells are discussed in Section \ref{sec:Preliminaries}. Global existence and finite time blow up of weak solutions to the problem \eqref{main} under the condition $J(v_0) <\delta$ are discussed in Section \ref{sec:Subcritical case}. 
Similar results are found in Section \ref{sec:The critical case} when $J(v_0) = \delta$.  Finally, in Section \ref{sec:finite_time_blow}, we presented the finite-time blow-up under the condition $J (v_0) > \delta$.

%This paper is organized as follows. In Section \ref{sec:Preliminaries}, we introduce preliminaries, and the  modified  family of  potential wells. In Section \ref{sec:Subcritical case}, global existence and finite time blow up of weak solutions to the problem \eqref{main} under the condition  $J(v_0) < \delta$ are presented. Similar results under the condition $J(v_0) = \delta$ are established in Section \ref{sec:The critical case}. Finally, we study the finite time blow up in Section \ref{sec:finite_time_blow}.
%%%%%%%%%%%%%%%%%%%%%%%%%%
\section{Preliminaries}\label{sec:Preliminaries}
%Henceforth, the norm $||\cdot||_p$ is denoted by $||g||_p = (\int_{U} |g|^pdx )^{\frac{1}{p}}, \forall g \in L^p({U})$, $1 \leq p < \infty$ and instead of $|| \cdot||_2$, we write $|| \cdot ||$. the inner product $(h,g)_2=\int_{U} h g dx$.
For $ 1 \leq p < \infty$, $g \in L^p({U})$, we denote the $L^p$- norm of $g$ by $||g||_p = (\int_{U} |g|^pdx )^{\frac{1}{p}},$ and the $L^2$-inner product by $(h,g)_2=\int_{U} h g dx$, $\forall g,h \in L^({U})$. If $p=2$, we simply write $||g ||$ instead of $|| g ||_2$.
\begin{definition}[Weak solution]
	A function $v(x,t)$ is called a weak solution to problem \eqref{main} on ${U} \times [0,T) $, if $v \in L^\infty (0,T; H^1_0({U}))$ with $v_t \in  L^2 (0,T; L^2 ({U}))$ such that  $v(x,0)=v_0(x)$  and satisfies  \eqref{main} in the distribution sense, i.e.,\\ \medskip \centerline{$(\nabla v, \nabla w)_2+(v_t, w)_2=(v|v|^{p-1}\log|v|, w)_2$,}\\
	for every  $t\in(0,T)$, $w\in H^1_0({U})$.
\end{definition}
\noindent
For completeness, we recall here the   definitions of the maximal existence time, and the notion of the finite time blow up which are quite standard.
\begin{definition}[Maximal existence time]
	The maximal time of existence $T$ of a weak solution  $v(x,t)$ to \eqref{main} is defined as follows:\\
	$(i)$ If $v(x,t)$ exists for all $0\leq t < \infty$, then $T=\infty$.
	\\
	$(ii)$ If there exists  $\tilde{t} \in (0,\infty)$ such that $v(x,t)$ exists for $0\leq t <\tilde{t}$, but does not exist at $t=\tilde{t}$, then $T=\tilde{t}$.
\end{definition}
\begin{definition}[Finite time blow-up]
	A weak solution $v$ of \eqref{main} is said to blow up in finite time if the maximal existence time $T$ is finite and $$\lim\limits_{t \to T^-} ||v(\cdot,t)||=\infty.$$
\end{definition}
%%%%%%%%%%%%%%%%%%%%%%%%%%%%%%%%%%%%%%%%%%%%%%%%%
%%%%%%%%%%%%%%%%%%%%%%%%%%%%%%%%%%%%%%%%%%%%%%%%%
\subsection{Potential wells}
In this subsection, we define the family of the potential wells and the energy functionals corresponding  to the nonlinear term $f(v)=v|v|^{p-1}\log|v|$.\\
First, we define  the Nehari functional $I$ and the potential energy functional $J$  as follows:
\begin{equation}\label{nehari}
	I(v)=||\nabla v||^2- \int\limits_{{U}}|v|^{p+1}\log |v| dx,
\end{equation}
and
\begin{equation}\label{potential}
	J(v)=\frac{1}{2}||\nabla v||^2- \frac{1}{p+1}\int\limits_{{U}}|v|^{p+1}\log |v| dx+ \frac{1}{(p+1)^2}||v||_{p+1}^{p+1}.
\end{equation}
Observe that 
\begin{equation}\label{combo}
	J(v)=\frac{p-1}{2(p+1)}||\nabla v||^2+ \frac{1}{p+1}I(v)+  \frac{1}{(p+1)^2}||v||_{p+1}^{p+1}.
\end{equation}
Moreover, the Nehari manifold is defined as 
\begin{equation*}
	\mathcal{N}(v)=\{v \in H^1_0({U})\mid  ||\nabla v||^2 \neq 0, I(v)=0\},
\end{equation*}
and the depth of the well is
\begin{equation*}
	\delta=\inf\limits_{v\in \mathcal{N}} J(v).
\end{equation*}
We now introduce the outer potential well
\begin{equation*}
	V=\{v\in H_0^1({U}) \mid J(v)<\delta, I(v)<0 \},
\end{equation*}
and the potential well
\begin{equation*}
	W=\{v\in H_0^1({U}) \mid  J(v)<\delta, I(v)>0 \} \cup \{0\}.
\end{equation*}
Observe that, if $v$ is a weak solution to \eqref{main}, then on multiplying \eqref{main} with $v_t$ and integrating over ${U} \times [0,t)$, we find that 
\begin{equation}\label{energy_inequality}
	J(v(\cdot,t))+\int\limits_{0}^{t} ||v_t(\cdot,\tau)||^2 d \tau=J(v_0),\  t\in [0,T).
\end{equation}
Set the associated energy functional to \eqref{main} as follows:
\begin{equation*}
	E(v(\cdot,t))= \int\limits_{0}^t ||v_t(\cdot, \tau)||^2 d \tau+J(v(\cdot,t)).
\end{equation*}
In view of \eqref{energy_inequality}, we immediately get $E(v(\cdot,t)) = E(v_0)$. Henceforth  we  refer to   \eqref{energy_inequality} as the conservation of energy.\\
Next, we define a family of potential energy functionals to extend the concept of a single potential well to a family of potential wells by
%Next, we extend the notion of a single potential well to the family of potential wells by defining a family of potential energy functionals
\begin{equation}\label{d_potential}
	J_{\rho}(v)=\frac{\rho}{2}||\nabla v||^2- \frac{1}{p+1}\int\limits_{{U}}|v|^{p+1}\log |v| dx+ \frac{1}{(p+1)^2}||v||_{p+1}^{p+1},
\end{equation}
where $\rho >0$.
Moreover, we define a family of Nehari functional
\begin{equation}\label{d_nehari}
	I_{\rho}(v)=\rho ||\nabla v||^2- \int\limits_{{U}}|v|^{p+1}\log |v| dx,
\end{equation}
the corresponding Nehari manifolds
\begin{equation*}
	\mathcal{N}_{\rho}(v)=\{v \in H^1_0({U})\mid ||\nabla v||^2 \neq 0, I_{\rho}(v)=0\},
\end{equation*}
and depth of family potential wells
\begin{equation}\label{d_delta}
	\delta(\rho)=\inf\limits_{v\in \mathcal{N}_\rho} J(v).
\end{equation}
We also introduce the outer of the family of potential wells 
\begin{equation*}
	V_\rho=\{v\in H_0^1({U}) \mid  J(v)<\delta(\rho), I_\rho(v)<0 \},
\end{equation*}
and  the family of potential wells 
\begin{equation*}
	W_\rho=\{v\in H_0^1({U}) \mid  J(v)<\delta(\rho), I_\rho (v)>0 \} \cup \{0\}.
\end{equation*}

Since the functionals $I$ and $J$ defined in  \eqref{nehari}--\eqref{potential} are the same as those in \cite{lian2020_40} (in the context of semilinear wave equation), we recall the results proved in that paper which can be used in our analysis.
%%%%%%%%%%%%%%%%%%%%%%%
\begin{lemma}(Cf. \cite{lian2020_40}, Lemma 2.1)\label{lemma1}
	Set $g(\zeta)=J(\zeta v)$, then for any nonzero $v \in H_0^1 ({U})$ we  have  the following:\\
	$(i)$  $\lim\limits_{\zeta \to \infty} J(\zeta v)=- \infty$, $\lim\limits_{\zeta \to 0} J(\zeta v)=0$.\\
	$(ii)$ There exists a unique $\zeta^* = \zeta ^*(v)$ in the interval $(0, \infty)$ such that
	$$\frac{d}{d \zeta} J(\zeta v) \mid_{\zeta= \zeta^* }=0.$$
	$(iii)$ The function $\zeta \mapsto J(\zeta v)$ is  decreasing on $\zeta^* \leq \zeta <\infty$, increasing on $0 \leq \zeta \leq  \zeta^*$, and attains its maximum at $\zeta=\zeta^*$.\\
	$(iv)$ The function $I$ satisfies $I(\zeta v)=\zeta \frac{d}{d \zeta}J(\zeta v)< 0$ when $\zeta^* < \zeta < \infty$, $I(\zeta v) >0$ when $0<\zeta <\zeta^*$, and $I(\zeta^*v)=0$.
\end{lemma}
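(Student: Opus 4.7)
My approach is to reduce the lemma entirely to one-variable calculus on $(0,\infty)$ by writing $J(\zeta v)$ as an explicit function of $\zeta$. Setting $A := \|\nabla v\|^2 > 0$, $C := \|v\|_{p+1}^{p+1} > 0$, and $B := \int_{U} |v|^{p+1}\log|v|\,dx$, the substitution $\zeta v$ into the definition \eqref{potential}, combined with the splitting $\log(\zeta|v|)=\log\zeta+\log|v|$, expresses $J(\zeta v)$ as a linear combination of $\zeta^2$, $\zeta^{p+1}$, and $\zeta^{p+1}\log\zeta$ with coefficients built from $A,B,C$. Assertion (i) is then immediate: as $\zeta\to 0^+$ each summand vanishes (using the standard limit $\zeta^{p+1}\log\zeta\to 0$), while as $\zeta\to\infty$ the term $-\frac{\zeta^{p+1}\log\zeta}{p+1}C$ dominates both $\zeta^2$ and $\zeta^{p+1}$, forcing $J(\zeta v)\to -\infty$.

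Differentiating that expression in $\zeta$ and cancelling the two $\pm\frac{\zeta^p C}{p+1}$ contributions produces the compact identity
$$\frac{d}{d\zeta}J(\zeta v) \;=\; \zeta A - \zeta^p\bigl(C\log\zeta + B\bigr).$$
Multiplying through by $\zeta$ recovers $\zeta^2 A - \zeta^{p+1}(C\log\zeta+B)$, which is precisely the value of the Nehari functional \eqref{nehari} at $\zeta v$. Hence the identity $I(\zeta v) = \zeta\,\frac{d}{d\zeta}J(\zeta v)$ claimed in (iv) is automatic, and the sign statements in (iv) will follow once (iii) is established.

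For (ii) and (iii) I would analyse the auxiliary function $\psi(\zeta) := \zeta^{p-1}(C\log\zeta+B)$, chosen so that $\zeta^{-1}\,\frac{d}{d\zeta}J(\zeta v) = A-\psi(\zeta)$. The structural observation is that $\psi$ is unimodal: writing $C\log\zeta+B = C\log(\beta\zeta)$ with $\beta := e^{B/C}$, a short computation shows that $\psi'$ factors as $\zeta^{p-2}$ times an affine function of $\log(\beta\zeta)$, so $\psi'$ has exactly one zero at an explicit point $\zeta_0>0$. Consequently $\psi$ strictly decreases on $(0,\zeta_0)$ from $\psi(0^+)=0$ to $\psi(\zeta_0)<0$ and strictly increases on $(\zeta_0,\infty)$ from $\psi(\zeta_0)$ to $+\infty$. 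Since $A>0$, the equation $\psi(\zeta)=A$ has no solution in $(0,\zeta_0)$ and exactly one solution $\zeta^*\in(\zeta_0,\infty)$ by the intermediate value theorem, giving (ii). On $(0,\zeta^*)$ we then have $\psi(\zeta)<A$, hence $\frac{d}{d\zeta}J(\zeta v)>0$, and on $(\zeta^*,\infty)$ the opposite sign, which yields (iii) and, via the identity of the previous paragraph, the sign assertions in (iv).

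The main obstacle is the uniqueness of $\zeta^*$: a priori the logarithmic factor in $\psi$ could allow $A-\psi(\zeta)$ to change sign several times, producing multiple critical points of $\zeta \mapsto J(\zeta v)$. What rules this out is the unimodal structure of $\psi$ described above, which rests on the very special form of $\psi'$ as a power of $\zeta$ multiplied by an affine function of $\log(\beta\zeta)$. Once this structural point is isolated, the remainder of the lemma is routine bookkeeping of signs.
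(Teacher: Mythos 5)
Your argument is correct and complete. Note that the paper itself does not prove this lemma---it is quoted from \cite{lian2020_40}---and your computation is essentially the standard one used there: expand $J(\zeta v)$ via $\log(\zeta|v|)=\log\zeta+\log|v|$ into a combination of $\zeta^2$, $\zeta^{p+1}$ and $\zeta^{p+1}\log\zeta$, observe the cancellation that yields $\frac{d}{d\zeta}J(\zeta v)=\zeta A-\zeta^{p}(C\log\zeta+B)$, and identify $\zeta\frac{d}{d\zeta}J(\zeta v)$ with $I(\zeta v)$. Your treatment of uniqueness through the unimodality of $\psi(\zeta)=C\zeta^{p-1}\log(\beta\zeta)$, whose derivative $C\zeta^{p-2}\bigl[(p-1)\log(\beta\zeta)+1\bigr]$ has a single sign change, is a clean way to dispose of the only nontrivial point; it uses $p>1$ (so that $p-1>0$ and $\psi(0^+)=0$), which is the paper's standing assumption on the exponent, and it correctly exploits $A>0$ together with $\psi\le 0$ on $(0,\zeta_0]$ to place the unique root $\zeta^*$ on the increasing branch.
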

%%%%%%%%%%%%%%%%%%%%%%%
\begin{lemma}(Cf. \cite{lian2020_40}, Lemma 2.3)\label{lemma3}
	The function $\rho \mapsto \delta(\rho)$ defined in \eqref{d_delta} has the following properties:\\
	$(i)$ There exists a unique $\rho_0>\frac{p+1}{2}$ such that  $\delta(\rho)>0$ for $0<\rho<\rho_0$, and $\delta(\rho_0)=0$.
	\\
	$(ii)$ The function $\rho \mapsto \delta(\rho)$ is strictly decreasing on $1\leq \rho \leq \rho_0$, increasing on $0 <\rho \leq 1$, and  at $\rho=1$ this function attains a local maximum, and $\delta(1)=\delta$.
\end{lemma}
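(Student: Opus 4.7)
The plan is to mimic the one-parameter family analysis in \cite{lian2020_40} and adapt it to the Nehari manifolds $\mathcal{N}_\rho$ defined in \eqref{d_nehari}. The core observation is that every nontrivial ray $\{\zeta v : \zeta>0\}$ in $H_0^1(U)$ meets each manifold $\mathcal{N}_\rho$ in exactly one point $\zeta^*(\rho,v)v$, and the map $\rho \mapsto \zeta^*(\rho,v)$ is smooth, so one can compare $\delta(\rho)$ across values of $\rho$ by sliding along rays.

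First I would establish an analog of Lemma \ref{lemma1} for $I_\rho$: for each nonzero $v \in H_0^1(U)$ the equation $I_\rho(\zeta v)=0$ admits a unique positive root $\zeta^*(\rho,v)$. A direct expansion gives
\begin{equation*}
I_\rho(\zeta v) = \zeta^2\Bigl(\rho\|\nabla v\|^2 - \zeta^{p-1}\log\zeta\,\|v\|_{p+1}^{p+1} - \zeta^{p-1}\int_U |v|^{p+1}\log|v|\,dx\Bigr),
\end{equation*}
and monotonicity of the bracketed factor in $\zeta$, together with its sign change between $\zeta\to 0^+$ and $\zeta\to\infty$, yields existence and uniqueness. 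Substituting $\int_U|v|^{p+1}\log|v|\,dx = \rho\|\nabla v\|^2$ from the constraint $I_\rho(v)=0$ into \eqref{potential} produces the key identity
\begin{equation*}
J(v) = \frac{p+1-2\rho}{2(p+1)}\|\nabla v\|^2 + \frac{1}{(p+1)^2}\|v\|_{p+1}^{p+1}, \qquad v\in\mathcal{N}_\rho,
\end{equation*}
so that $\delta(\rho)=\inf_{v\in H_0^1\setminus\{0\}} J\bigl(\zeta^*(\rho,v)v\bigr)$.

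For part (ii), I would differentiate $\rho\mapsto J(\zeta^*(\rho,v)v)$ along a fixed ray, using the first order optimality $I_\rho(\zeta^*(\rho,v)v)=0$ together with the envelope identity from Lemma \ref{lemma1}(iv) to kill the term involving $\partial\zeta^*/\partial\rho$. The resulting derivative will vanish precisely at $\rho=1$, be positive for $\rho<1$ and negative for $\rho>1$; passing to the infimum over $v$ by standard comparison arguments transfers pointwise monotonicity along rays to $\delta(\rho)$. The equality $\delta(1)=\delta$ is immediate since $\mathcal{N}_1=\mathcal{N}$. For part (i), the displayed formula gives $J(v)>0$ on $\mathcal{N}_\rho$ whenever $\rho\le\tfrac{p+1}{2}$; a logarithmic Sobolev or Sobolev embedding bound applied to $I_\rho(v)=0$ yields a uniform lower bound $\|\nabla v\|\ge c(\rho)>0$ on $\mathcal{N}_\rho$, hence $\delta(\rho)>0$. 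For $\rho$ slightly above $\tfrac{p+1}{2}$ the positive $\|v\|_{p+1}^{p+1}$ term still dominates, so $\rho_0>\tfrac{p+1}{2}$. Uniqueness of the threshold and $\delta(\rho_0)=0$ follow from the strict monotonicity established in (ii).

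I expect the main obstacle to be the differentiation step: $\zeta^*(\rho,v)$ is defined only implicitly through a transcendental equation containing $\log\zeta$, so the sign of $\frac{d}{d\rho}J(\zeta^*(\rho,v)v)$ must be extracted purely from the first order condition rather than from a closed form expression. A secondary subtlety is transferring pointwise monotonicity along rays to the infimum $\delta(\rho)$; this is handled cleanly by comparison of Nehari points at two different values $\rho_1<\rho_2$ rather than by differentiating under the infimum, and a brief continuity/compactness argument will give $\delta(\rho_0)=0$ as a limiting value.
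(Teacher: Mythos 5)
The paper offers no proof of this lemma: it is imported verbatim from \cite{lian2020_40} on the grounds that $I$ and $J$ coincide with the functionals there, so your attempt can only be measured against the standard argument in that reference. Your skeleton is the right one --- unique Nehari point $\zeta^*(\rho,v)$ on each ray, the identity $J(v)=\tfrac{p+1-2\rho}{2(p+1)}||\nabla v||^2+\tfrac{1}{(p+1)^2}||v||_{p+1}^{p+1}$ on $\mathcal{N}_\rho$, and comparison of Nehari points across values of $\rho$. Two local corrections. First, the bracketed factor in your expansion of $I_\rho(\zeta v)$ is \emph{not} monotone in $\zeta$: its derivative is $-\zeta^{p-2}\bigl[(p-1)||v||_{p+1}^{p+1}\log\zeta+||v||_{p+1}^{p+1}+(p-1)\int_U|v|^{p+1}\log|v|\,dx\bigr]$, which changes sign once. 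Uniqueness of the root survives because the factor equals $\rho||\nabla v||^2>0$ as $\zeta\to0^+$, increases then decreases, and tends to $-\infty$; this unimodality (not monotonicity) is also what gives $\partial_\zeta\Phi<0$ at the root and hence, via the implicit function theorem, $\partial_\rho\zeta^*>0$ --- the sign you need so that $\tfrac{d}{d\rho}J(\zeta^*(\rho,v)v)=(1-\rho)\zeta^*||\nabla v||^2\,\partial_\rho\zeta^*$ has the sign of $1-\rho$.

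There are two genuine gaps. (1) Your proof that $\delta(\rho)>0$, and the strictness of the monotonicity of the infimum in (ii), both rest on a uniform lower bound $||\nabla v||\ge c(\rho)>0$ over $\mathcal{N}_\rho$, which you extract from the embedding $H_0^1(U)\hookrightarrow L^{p+1+\theta}(U)$ applied to $\rho||\nabla v||^2=\int_U|v|^{p+1}\log|v|\,dx\le\tfrac{1}{e\theta}||v||_{p+1+\theta}^{p+1+\theta}$. For $n>2$ this requires $p+1+\theta\le\tfrac{2n}{n-2}$, i.e.\ exactly the restriction on $p$ that the present paper claims to dispense with (see the Remark following Lemma \ref{lemma_inv}, which asserts Lemmas \ref{lemma1}--\ref{lemma_inv} need no Gagliardo--Nirenberg--Sobolev inequality). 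As written, your argument proves the lemma only for $1<p<\tfrac{n+2}{n-2}$ when $n>2$; you must either supply a substitute for this step in the supercritical range or accept the restricted range. (2) Strict monotonicity does not by itself produce the zero $\rho_0$: a strictly decreasing positive function need never vanish. You must separately show $\delta(\rho)\le0$ for large $\rho$, e.g.\ by fixing one ray and noting that on $\mathcal{N}_\rho$ the constraint gives $(\zeta^*)^{p+1}||v||_{p+1}^{p+1}=\rho(\zeta^*)^2||\nabla v||^2\,||v||_{p+1}^{p+1}\big/\bigl(\int_U|v|^{p+1}\log|v|\,dx+||v||_{p+1}^{p+1}\log\zeta^*\bigr)$, so the positive term in $J(\zeta^*v)$ is $o\bigl(\rho(\zeta^*)^2||\nabla v||^2\bigr)$ as $\rho\to\infty$ while the negative term is of exact order $\rho(\zeta^*)^2||\nabla v||^2$; then continuity and monotonicity yield a unique crossing $\rho_0>\tfrac{p+1}{2}$.
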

%%%%%%%%%%%%%%%%%%%%%%%%%%%%%%%%%%%%%%%%%%%%%%%
%\subsection{Invariant sets}
\begin{lemma}\label{lemma_inv}(Cf. \cite{lian2020_40}, Lemma 2.4)
	If $0<J(v)<\delta$ for some $v \in H^1_0({U})$, then the sign of $I_\rho(v)$ does not change in $\rho_1<\rho < \rho_2$, where $\rho_1 <1< \rho_2$ are two roots of the equation $J(v)=\delta(\rho)$.
\end{lemma}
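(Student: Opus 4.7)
The plan is a short contradiction argument that rests on two observations. First, for a fixed $v$, the map $\rho \mapsto I_\rho(v) = \rho\|\nabla v\|^2 - \int_U |v|^{p+1}\log|v|\, dx$ is an affine function of $\rho$; and since $J(v)>0$ forces $v \not\equiv 0$ (because $J(0)=0$) and hence $\|\nabla v\|^2 > 0$ by Poincar\'e's inequality, this affine function is in fact strictly increasing and continuous. Second, any zero $\rho^*$ of $\rho \mapsto I_\rho(v)$ automatically satisfies $v \in \mathcal{N}_{\rho^*}$, so by the infimum definition \eqref{d_delta} one has $J(v) \geq \delta(\rho^*)$.

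Before invoking these, I would use Lemma \ref{lemma3} to locate and separate the two roots. Since $\delta(\cdot)$ attains its local maximum $\delta = \delta(1)$ at $\rho = 1$ and is strictly monotone on each side of $\rho = 1$, the hypothesis $0 < J(v) < \delta$ together with the intermediate value theorem produces roots $\rho_1 \in (0,1)$ and $\rho_2 \in (1, \rho_0)$ of $J(v) = \delta(\rho)$; the same monotonicity yields the strict inequality $\delta(\rho) > J(v)$ for \emph{every} $\rho$ in the open interval $(\rho_1, \rho_2)$.

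Now suppose, toward contradiction, that the sign of $I_\rho(v)$ is not constant on $(\rho_1, \rho_2)$. By continuity of $\rho \mapsto I_\rho(v)$, the intermediate value theorem supplies some $\rho^* \in (\rho_1, \rho_2)$ with $I_{\rho^*}(v) = 0$. The first observation then gives $v \in \mathcal{N}_{\rho^*}$ and hence $J(v) \geq \delta(\rho^*)$, while the second paragraph gives $\delta(\rho^*) > J(v)$, a contradiction. Thus $I_\rho(v)$ keeps a fixed sign throughout $(\rho_1, \rho_2)$, as claimed.

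The only mildly delicate point is securing the \emph{strict} inequality $\delta(\rho) > J(v)$ on the entire open interval $(\rho_1, \rho_2)$, rather than merely its weak counterpart; this is exactly where the strict monotonicity of $\delta$ off of its maximum, supplied by Lemma \ref{lemma3}, is needed. Everything else is a routine combination of the intermediate value theorem and the infimum characterization of $\delta(\rho)$.
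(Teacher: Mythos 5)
Your proof is correct. The paper itself does not prove this lemma---it only cites Lemma~2.4 of \cite{lian2020_40}---and your argument (locate a zero $\rho^*$ of the strictly increasing affine map $\rho \mapsto I_\rho(v)$ via the intermediate value theorem, note $v \in \mathcal{N}_{\rho^*}$ so $J(v) \ge \delta(\rho^*)$ by \eqref{d_delta}, and contradict $\delta(\rho^*) > J(v)$) is exactly the standard one used in that reference. The one point you rightly flag, the strict inequality $\delta(\rho) > J(v)$ on $(\rho_1,1)$, needs the monotonicity of $\delta$ on $(0,1]$ to be strict (the paper's Lemma~\ref{lemma3}(ii) says only ``increasing'' there); if one prefers not to rely on that, the weak inequality $\delta(\rho^*) \ge J(v)$ combined with $J(v) \ge \delta(\rho^*)$ forces $\rho^*$ to be a further root of $J(v)=\delta(\rho)$ inside $(\rho_1,\rho_2)$, contradicting that $\rho_1,\rho_2$ are the two roots.
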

\begin{remark}
	To prove Lemma \ref{lemma1} -- \ref{lemma_inv}, we do not need the Gagliardo–Nirenberg–Sobolev inequality.
\end{remark}
\begin{lemma}\label{lemma_ineq}(Cf. \cite{komornik1997_39}, Theorem 8.1)
	Assume $f: \mathbb{R}^+ \to \mathbb{R}^+$ be a non-increasing function, and there exists a positive constant
	$ K $ such that 
	\begin{equation}\label{f_es}
		\int\limits_{t}^\infty f(\tau)d\tau <Kf(t), \ t \geq 0,
	\end{equation}
	then $f(t) \leq f(0) e^{1-t/K}$, $t \geq 0$.
\end{lemma}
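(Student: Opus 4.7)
The plan is to convert the integral inequality into a differential inequality for the cumulative integral of $f$, solve it explicitly, and then use monotonicity of $f$ to recover the pointwise bound. Define
\begin{equation*}
F(t) := \int_t^\infty f(\tau)\,d\tau,
\end{equation*}
which is finite by hypothesis (since $F(0) < K f(0) < \infty$). Because $f$ is non-increasing and non-negative, $F$ is absolutely continuous on $[0,\infty)$ with $F'(t) = -f(t)$ almost everywhere. The assumption $F(t) < K f(t)$ then reads $F(t) + K F'(t) \leq 0$ a.e. I would multiply by the integrating factor $e^{t/K}$ and observe that
\begin{equation*}
\frac{d}{dt}\!\left( e^{t/K} F(t) \right) = e^{t/K}\!\left( \tfrac{1}{K}F(t) + F'(t) \right) \leq 0,
\end{equation*}
so $e^{t/K} F(t)$ is non-increasing, giving $F(t) \leq F(0) e^{-t/K} \leq K f(0) e^{-t/K}$ (using the hypothesis once more at $t=0$).

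The second step passes from a bound on $F$ to a bound on $f$ using monotonicity. Since $f$ is non-increasing, for every $t \geq 0$,
\begin{equation*}
K f(t+K) \leq \int_t^{t+K} f(\tau)\,d\tau \leq F(t) \leq K f(0) e^{-t/K},
\end{equation*}
so $f(t+K) \leq f(0) e^{-t/K}$. Renaming the variable yields $f(t) \leq f(0) e^{1-t/K}$ for all $t \geq K$. For $0 \leq t \leq K$ the claim is automatic, because $e^{1-t/K} \geq 1$ and $f(t) \leq f(0)$ by monotonicity. Combining the two ranges gives the desired estimate on $[0,\infty)$.

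The only delicate point, really an unavoidable technicality rather than a genuine obstacle, is the justification of $F'(t) = -f(t)$ almost everywhere and the resulting Gronwall-type comparison: since $f$ is merely assumed non-increasing (not continuous), $F$ is only absolutely continuous, and the inequality $F + KF' \leq 0$ holds a.e. This is more than enough to integrate the factor $e^{t/K} F(t)$ and conclude monotonicity, so no additional regularity of $f$ is needed. Everything else is a one-line application of monotonicity, and the constant $e^{1}$ in the final estimate arises precisely from the shift by $K$ used in the averaging step.
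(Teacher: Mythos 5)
Your proof is correct. The paper does not prove this lemma itself --- it simply cites Komornik's Theorem 8.1 --- and your argument (integrating factor for $F(t)=\int_t^\infty f$, then the averaging step $Kf(t+K)\leq\int_t^{t+K}f(\tau)\,d\tau\leq F(t)$ to recover the pointwise bound, with the shift by $K$ producing the factor $e^1$) is precisely the standard proof of that cited result, including the correct handling of the range $0\leq t\leq K$ by monotonicity.
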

\begin{lemma}\label{lemma_log}(Cf. \cite{nhan2017_73}, Lemma 2.1)
	For any positive number $\theta$, we have the inequality
	\begin{equation}\label{lem_log}
		\log x \leq \frac{x^\theta}{e \theta},
	\end{equation}
where $x \in [1,\infty)$.
\end{lemma}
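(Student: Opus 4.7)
The inequality $\log x \leq \frac{x^{\theta}}{e\theta}$ reduces to a routine one-variable calculus exercise, and my plan is to verify it by locating the minimum of the difference.

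Concretely, I would set $f(x) = \frac{x^{\theta}}{e\theta} - \log x$ on $[1,\infty)$, compute the derivative
\[
f'(x) \;=\; \frac{x^{\theta-1}}{e} - \frac{1}{x} \;=\; \frac{1}{x}\!\left(\frac{x^{\theta}}{e}-1\right),
\]
and observe that $f'(x) = 0$ precisely when $x^{\theta} = e$, i.e.\ at the unique critical point $x_{*} = e^{1/\theta}$. Since $\theta > 0$, one has $x_{*} \geq 1$, so $x_{*}$ lies in the domain of interest. Moreover the sign analysis of $f'$ shows that $f$ is decreasing on $[1, x_{*}]$ and increasing on $[x_{*}, \infty)$, so the minimum of $f$ over $[1,\infty)$ is attained at $x_{*}$.

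The final step is to evaluate $f(x_{*})$. A direct computation gives
\[
f(x_{*}) \;=\; \frac{(e^{1/\theta})^{\theta}}{e\theta} - \log\!\bigl(e^{1/\theta}\bigr) \;=\; \frac{e}{e\theta} - \frac{1}{\theta} \;=\; 0,
\]
so $f(x) \geq 0$ for every $x \in [1,\infty)$, which is exactly the desired inequality \eqref{lem_log}. I do not anticipate any real obstacle here; the only small subtlety is noting that $e^{1/\theta} \geq 1$ so that the critical point actually lies in the interval where the estimate is claimed, but this is automatic from the hypothesis $\theta > 0$. Alternatively, one could substitute $y = \log x \geq 0$ and reduce the inequality to the classical estimate $ez \leq e^{z}$ for $z = \theta y \geq 0$, whose proof is essentially the same one-variable minimization.
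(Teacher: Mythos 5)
Your proof is correct: the minimization of $f(x)=\frac{x^{\theta}}{e\theta}-\log x$ at $x_{*}=e^{1/\theta}$, where $f(x_{*})=0$, is exactly the standard argument for this inequality. The paper itself gives no proof but merely cites the result from its reference, and the argument given there is this same elementary calculus computation, so your write-up simply supplies the omitted details.
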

Now we are ready to prove the following invariant property of the sets $W_\rho$ and $V_\rho$.
\begin{theorem}[Invariant sets]\label{invariant}
	Assume  that  $0<\eta<\delta$ and $v_0 \in H^1_0({U})$. Let $\rho_1<\rho_2$ are the two roots of the equation $\delta(\rho)=\eta$, then the following hold true. \\
	$(i)$ If $I(v_0)>0$ and   $0<J(v_0)\leq \eta$, then all  weak solutions to problem \eqref{main}  belong to $W_\rho$ for $\rho_1<\rho<\rho_2$.
	\\
	$(ii)$ If $I(v_0)<0$ and   $0<J(v_0)\leq \eta$, then all  weak solutions to problem \eqref{main}  belong to $V_\rho$ for $\rho_1<\rho<\rho_2$.
\end{theorem}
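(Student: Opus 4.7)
The plan is to argue by contradiction using the continuity of $t\mapsto v(\cdot,t)$ and the energy identity. First, from Lemma \ref{lemma3} the map $\rho\mapsto\delta(\rho)$ attains a local maximum at $\rho=1$ with $\delta(1)=\delta$, so the hypothesis $\eta<\delta$ produces exactly two roots $\rho_1<1<\rho_2$ of $\delta(\rho)=\eta$, and moreover $\delta(\rho)>\eta$ throughout the open interval $(\rho_1,\rho_2)$. Applying Lemma \ref{lemma_inv} to $v_0$ (valid since $0<J(v_0)\leq\eta<\delta$), the sign of $I_\rho(v_0)$ is constant on $(\rho_1,\rho_2)$ and equals the sign of $I(v_0)=I_1(v_0)$. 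Combined with $J(v_0)\leq\eta<\delta(\rho)$, this places $v_0\in W_\rho$ in case (i) and $v_0\in V_\rho$ in case (ii).

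Next, I would suppose $v(\cdot,t)$ leaves $W_\rho$ (respectively $V_\rho$) for some $t\in(0,T)$ and let $t_0$ be the infimum of such times. The regularity $v\in L^\infty(0,T;H_0^1(U))$ with $v_t\in L^2(0,T;L^2(U))$ yields continuity of $t\mapsto J(v(\cdot,t))$ and $t\mapsto I_\rho(v(\cdot,t))$, so $v(\cdot,t_0)$ must lie on the boundary of the relevant set. The conservation of energy \eqref{energy_inequality} gives $J(v(\cdot,t_0))\leq J(v_0)\leq\eta<\delta(\rho)$, which excludes the boundary piece where $J(v)=\delta(\rho)$. Hence $I_\rho(v(\cdot,t_0))=0$, and provided $v(\cdot,t_0)\neq 0$, one has $v(\cdot,t_0)\in\mathcal{N}_\rho$, forcing $J(v(\cdot,t_0))\geq\delta(\rho)$ and contradicting energy conservation once more.

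It only remains to rule out $v(\cdot,t_0)=0$. In case (i), $0\in W_\rho$, so $v(\cdot,t_0)=0$ would contradict the assumption that the trajectory actually left $W_\rho$ at $t_0$. In case (ii), I would test \eqref{main} against $v$ and integrate over $U$ to obtain
\[
\frac{d}{dt}\|v(\cdot,t)\|^2 = -2\,I(v(\cdot,t)).
\]
For $t<t_0$ we have $I_\rho(v(\cdot,t))<0$; Lemma \ref{lemma_inv} (when $J(v(\cdot,t))>0$) together with the decomposition \eqref{combo} (when $J(v(\cdot,t))\leq 0$, where the nonnegative terms force $I(v(\cdot,t))<0$ directly) yield $I(v(\cdot,t))<0$, so $\|v(\cdot,t)\|^2$ is strictly increasing on $[0,t_0)$. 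Consequently $\|v(\cdot,t_0)\|\geq\|v_0\|>0$ and $v(\cdot,t_0)\neq 0$. I expect the main technical points to be the rigorous continuity of $J\circ v$ and $I_\rho\circ v$ in $t$ (handled by standard density/approximation arguments for weak solutions) and the clean treatment of the $J(v(\cdot,t))\leq 0$ edge case in part (ii); past these, the argument reduces to a transparent clash between energy conservation and the infimum definition of $\delta(\rho)$ on $\mathcal{N}_\rho$, and crucially avoids any Sobolev-type restriction on $p$.
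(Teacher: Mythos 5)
Your argument is correct and follows essentially the same route as the paper: place $v_0$ in $W_\rho$ (resp.\ $V_\rho$) via Lemma \ref{lemma_inv}, then rule out a first exit time $t_0$ by noting that energy conservation \eqref{energy_inequality} forbids $J(v(\cdot,t_0))=\delta(\rho)$ while the infimum defining $\delta(\rho)$ over $\mathcal{N}_\rho$ forbids $I_\rho(v(\cdot,t_0))=0$. You are in fact slightly more careful than the paper, which omits part (ii) entirely and does not address the possibility $v(\cdot,t_0)=0$ (needed to place $v(\cdot,t_0)$ in $\mathcal{N}_\rho$); your monotonicity argument for $\|v(\cdot,t)\|^2$ closes that small gap cleanly.
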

\begin{proof}
	$(i)$ Let $v$ be a weak solution to \eqref{main}  with $I(v_0)>0$, and $0 < J(v_0) \leq \eta <\delta$. Suppose $v(\cdot,t)$ exists in $[0,T)$. In view of Lemma \ref{lemma3} $(iii)$,  we deduce that $\rho_1<1 <\rho_2$. Moreover, from Lemma \ref{lemma_inv}, one can easily get that $I_\rho(v_0) >0$, since $I(v_0)>0$. Therefore for $\rho_1 < \rho < \rho_2$, we obtain $v_0 \in W_\rho$. Next we prove that for any $\rho \in (\rho_1,\rho_2)$, we have $v(\cdot,t) \in W_\rho$, $0<t<T$. On the contrary, we assume that for $\rho_1 < \rho < \rho_2$ there exists $t_0 \in (0,T)$ such that $v(\cdot,t_0) \in \partial W_\rho$. Hence, either  $J(v(\cdot,t_0))=\delta(\rho)$ or $I_\rho (v(\cdot,t_0))=0$ holds. From the conservation of energy \eqref{energy_inequality}, we have
	\begin{equation}\label{energy_equ}
		J(v(\cdot, t))+\int\limits_{0}^{t} ||v_t||^2 dt =J(v_0) \leq \eta < \delta(\rho),\ 0<t<T, \  \rho_1 < \rho < \rho_2,
	\end{equation}
	which is a contradiction to $J(v(\cdot,t_0))=\delta(\rho)$. On the other hand, if $I_\rho (v(\cdot,t_0))=0$, then by the definition  of $\delta(\rho)$, we obtain $J(v(t_0))\geq \delta(\rho)$, which is a contradiction to conservation of energy \eqref{energy_equ}.\\
	$(ii)$  One can easily prove $(ii)$ using the same arguments used in the proof of $(i)$, and the details are omitted.
	%Using the similar arguments used in the proof of  $(i)$, one can easily prove $(ii)$ and the details are omitted.
\end{proof}
%%%%%%%%%%%%%%%%%%%%%%%%%%%%%%%%%%%%%%%%%
%%%%%%%%%%%%%%%%%%%%%%%%%%%%%%%%%%%%%%%%
\section{Subcritical case $J(v_0)<\delta$}\label{sec:Subcritical case}
In this section, we establish the global existence, and finite time blow up of the solutions to \eqref{main} under subcritical initial energy level $J(v_0) <\delta$ depending on the sign of $I(v_0)$. We first consider the case $I(v_0)>0$ and establish the global existence of solution. We also prove decay estimates whenever global solutions exist.
%\section{Global existence}
%\subsection{Global existence at $J(v_0)<\delta$}
\begin{theorem}\label{them1}
	Assume that  $I(v_0)>0$, $J(v_0)<\delta$  then problem \eqref{main} admits a global weak solution $v \in L^\infty (0,\infty; H^1_0({U}))$. Moreover, we have $v(\cdot, t) \in W$ for $0\leq t<\infty$, and there exist constants $\gamma >0$ and $C>0$  such that
	\begin{equation}\label{decay}
		||v(\cdot,t)||\leq C e^{-\gamma t}, \quad 0 \leq t < \infty.
	\end{equation}
\end{theorem}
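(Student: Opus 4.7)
The plan is to split the proof into two tasks: (i) constructing a global weak solution via the Faedo–Galerkin method together with the invariance of the potential well $W$, and (ii) deriving exponential decay via a sharpened invariance statement in a \emph{family} potential well $W_\rho$ with $\rho<1$, combined with Lemma \ref{lemma_ineq}.

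\textbf{Global existence.} Let $\{w_j\}$ be the $H_0^1(U)$-orthonormal basis of eigenfunctions of $-\Delta$ with Dirichlet data, and set $v_m(x,t)=\sum_{j=1}^m g_{jm}(t)w_j(x)$ where $(g_{jm})$ solves the standard ODE system obtained by testing \eqref{main} against $w_1,\dots,w_m$, with initial data $v_m(\cdot,0)\to v_0$ in $H_0^1(U)$. Local-in-time solvability of this ODE follows from the continuity of $s\mapsto s|s|^{p-1}\log|s|$ (interpreted as $0$ at $s=0$). Multiplying by $g_{jm}'(t)$ and summing gives the discrete energy identity
\begin{equation*}
\int_0^t \|(v_m)_t(\cdot,\tau)\|^2 d\tau + J(v_m(\cdot,t)) = J(v_m(\cdot,0)).
\end{equation*}
Since $J(v_0)<\delta$ and $I(v_0)>0$, one may take $m$ large so that $J(v_m(\cdot,0))<\delta$ and $I(v_m(\cdot,0))>0$; by the invariance statement Theorem \ref{invariant}(i), $v_m(\cdot,t)\in W$ for all $t$ in the interval of existence. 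Then identity \eqref{combo} together with $I(v_m)>0$ yields
\begin{equation*}
\tfrac{p-1}{2(p+1)}\|\nabla v_m(\cdot,t)\|^2 \le J(v_m(\cdot,t)) \le J(v_m(\cdot,0)),
\end{equation*}
which is the key a priori bound. Combined with the energy identity this gives uniform control of $v_m$ in $L^\infty(0,\infty;H_0^1(U))$ and of $(v_m)_t$ in $L^2(0,\infty;L^2(U))$, so the ODE solutions are global.

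\textbf{Passage to the limit.} From the bounds above, up to a subsequence, $v_m\rightharpoonup^* v$ in $L^\infty(0,\infty;H_0^1)$ and $(v_m)_t\rightharpoonup v_t$ in $L^2(0,\infty;L^2)$. Aubin–Lions gives strong convergence $v_m\to v$ in $L^2(0,T;L^2(U))$ and a.e. on $U\times(0,T)$. The main technical step is to identify the weak limit of the nonlinear term $v_m|v_m|^{p-1}\log|v_m|$: using the uniform $H_0^1$ bound, the logarithm is tamed by Lemma \ref{lemma_log} (which bounds $\log x$ by $x^\theta/(e\theta)$ for any $\theta>0$), so that $|v_m|^{p}|\log|v_m||$ is bounded in some $L^q$ with $q>1$, allowing Vitali's theorem to pass to the limit. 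The limit $v$ is then a weak solution with $v(\cdot,t)\in W$ for all $t\ge 0$ by the invariance theorem applied to $v$ itself.

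\textbf{Exponential decay.} Since $0<J(v_0)<\delta$, Lemma \ref{lemma3} furnishes roots $\rho_1<1<\rho_2$ of $\delta(\rho)=J(v_0)$. Fix any $\rho\in(\rho_1,1)$. By Theorem \ref{invariant}(i), $v(\cdot,t)\in W_\rho$ for all $t\ge 0$, i.e.\ $I_\rho(v(\cdot,t))>0$, which means
\begin{equation*}
\int_U |v|^{p+1}\log|v|\,dx \le \rho\,\|\nabla v(\cdot,t)\|^2.
\end{equation*}
Hence $I(v(\cdot,t))\ge(1-\rho)\|\nabla v(\cdot,t)\|^2$. On the other hand, testing \eqref{main} with $v$ itself yields $\frac{1}{2}\frac{d}{dt}\|v(\cdot,t)\|^2=-I(v(\cdot,t))$, so $t\mapsto\|v(\cdot,t)\|^2$ is non-increasing. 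Integrating from $t$ to $\infty$ and using Poincaré's inequality $\|v\|^2\le C_P\|\nabla v\|^2$ gives
\begin{equation*}
\int_t^\infty \|v(\cdot,\tau)\|^2\,d\tau \le C_P\int_t^\infty\|\nabla v(\cdot,\tau)\|^2\,d\tau \le \frac{C_P}{1-\rho}\int_t^\infty I(v(\cdot,\tau))\,d\tau = \frac{C_P}{2(1-\rho)}\|v(\cdot,t)\|^2.
\end{equation*}
Applying Lemma \ref{lemma_ineq} with $K=C_P/(2(1-\rho))$ then yields $\|v(\cdot,t)\|^2\le\|v_0\|^2 e^{1-t/K}$, which is \eqref{decay}. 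The principal obstacle I anticipate is the identification of the weak limit of $v_m|v_m|^{p-1}\log|v_m|$ for unrestricted $p$; this is where Lemma \ref{lemma_log} is indispensable, since it replaces the logarithm by a power with arbitrarily small exponent, converting the issue into a uniform-integrability argument controlled purely by the $H_0^1$ bound.
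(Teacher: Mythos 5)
Your overall architecture (Galerkin construction, invariance of $W$, the a priori bound from \eqref{combo}, weak limits, then decay from $I_\rho>0$ with $\rho<1$) matches the paper's. Your decay step is correct but takes a slightly different route: you integrate $\|v(\cdot,t)\|^2$ over $(t,\infty)$ and invoke Lemma \ref{lemma_ineq}, whereas the paper obtains the same conclusion directly from the differential inequality $\frac{d}{dt}\|v\|^2\le-2\beta\lambda_1\|v\|^2$ and Gronwall; both work (just enlarge $K$ slightly to meet the strict inequality in \eqref{f_es}).

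The genuine gap is in the passage to the limit. You assert that $|v_m|^{p}|\log|v_m||$ is bounded in some $L^q$, $q>1$, ``controlled purely by the $H_0^1$ bound'' after taming the logarithm with Lemma \ref{lemma_log}. For unrestricted $p$ this is false: if $n\ge3$ and $p>\frac{n+2}{n-2}$, then $H_0^1({U})$ does not embed into $L^{p+1}({U})$, let alone into $L^{(p+\theta)q}({U})$ with $q>1$, so the uniform $H_0^1$ bound alone cannot deliver uniform integrability of the nonlinearity. Since the whole point of the theorem (and its claimed advantage over \cite{peng2021_100}) is the absence of any restriction on $p$, your argument breaks down precisely in the regime the theorem is designed to cover. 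The paper's fix is structural rather than analytic: because $J$ contains the term $+\frac{1}{(p+1)^2}\|v\|_{p+1}^{p+1}$ and the other two terms in \eqref{new} are nonnegative on $W$, the inequality $J(v_m(\cdot,t))<\delta$ together with $I(v_m(\cdot,t))>0$ yields the embedding-free bound $\|v_m(\cdot,t)\|_{p+1}^{p+1}\le(p+1)^2\delta$. Then the specific choices $\theta=\frac12$ in Lemma \ref{lemma_log} and $r=1+\frac{1}{2p+1}$ give $\bigl(p+\frac12\bigr)r=p+1$, so that $\int_{\{|v_m|\ge1\}}\bigl(|v_m|^{p}\log|v_m|\bigr)^r dx$ is controlled by $\|v_m\|_{p+1}^{p+1}$, while on $\{|v_m|<1\}$ the integrand is bounded pointwise by $(pe)^{-r}$. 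You need to replace your $H_0^1$-based uniform integrability claim with this $L^{p+1}$ bound extracted from the energy functional itself.
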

\begin{proof}The proof is divided into two steps. \\
	%We devide the proof into two steps.\\
	\textbf{Step 1:} In this step, we prove global existence of a solution to \eqref{main}.\\
	Let $\{w_j(x)\}_{j=1}^{\infty}$ be an orthogonal basis of $H^1_0({U})$. Using the Galerkin method employed in \cite{chen2012_3, yacheng2006_64}, we can construct a sequence $(v_m)$ of  approximate solutions to \eqref{main} given by
	\begin{equation}\label{approx1}
		v_m(x,t)=\sum\limits_{j=1}^m  w_j(x) g_{jm}(t), \ m=1,2,\dots,
	\end{equation}
	which satisfy
	\begin{equation}\label{approximate}
		(\frac{\partial v_{m}}{\partial t}, w_k)_2+(\nabla v_m, \nabla w_k)_2=(v_m|v_m|^{p-1}\log|v_m|, w_k)_2,\ k=1,2, \dots,
	\end{equation}
	\begin{equation}\label{approx_initial}
		v_m(0,t)=\sum\limits_{j=1}^m  w_j(x) g_{jm}(0) \to v_0(x) \ \textrm{in}\ H_0^1({U}).
	\end{equation}
	On substituting \eqref{approx1} in \eqref{approximate}--\eqref{approx_initial}, we find that $(g_{jm})_{j=1}^m$ solves an initial value problem whose solution is guaranteed in $[0,T_1]$ for some $T_1>0$.
	On multiplying \eqref{approximate} by $g_{km}^{\prime}$ and summing over $k$, we  get
	\begin{equation}\label{approx}
		\int\limits_0^t ||\frac{\partial v_{m}}{\partial t} (\cdot,\tau)||^2 d \tau+J(v_m(\cdot,t))=J(v_m(\cdot,0))<\delta, \quad 0 \leq t<T_1.
	\end{equation}
	We next show that there exists $  m_0 \in \mathbb{N}$ such that  $v_m(\cdot,t) \in W$, $m \geq m_0$, $0\leq t <T_1$. For, we observe that $v_m(\cdot,0) \in W,$ for sufficiently large $m$. On the contrary, assume  that $v_m(\cdot, \tilde{t}) \notin W$ for some $\tilde{t}$. Then one can easily get that either $v_m (\cdot, \tilde{t}) \in \mathcal{N}$ or $J(v_m(\cdot,\tilde{t}))=\delta$.  In the both cases we obtain $J(v_m(\cdot,\tilde{t}))\geq \delta$, which is a contradiction. Therefore $v_m(\cdot,t) \in W$, and we have
	\begin{align}
		J(v_m(\cdot,t))&=\frac{1}{(p+1)^2}||v_m(\cdot,t)||_{p+1}^{p+1}+ \frac{1}{p+1}I(v_m(\cdot,t))+\frac{p-1}{2(p+1)}||\nabla v_m(\cdot,t)||^2 \label{new}
		\\
		&\geq \frac{p-1}{2(p+1)}||\nabla v_m(\cdot,t)||^2. \label{jm}
	\end{align}
Moreover, from \eqref{jm}, we conclude that there is no finite time blow up for $v_m$, i.e., $v_m(\cdot,t)$ exists for $0 \leq t <\infty$.
Furthermore,	using \eqref{approx}--\eqref{jm}, one readily obtains 
	\begin{equation}\label{newapp}
		\int\limits_0^t ||\frac{\partial v_{m}}{\partial t}(\cdot,\tau)||^2 d \tau+\frac{p-1}{2(p+1)}||\nabla v_m(\cdot,t)||^2<\delta, \quad 0 \leq t<\infty,\ m \geq m_0.
\end{equation}

	% Thus from \eqref{newapp} we can get 
	%	\begin{equation}\label{nablau}
	%	||\nabla u_m||^2<\frac{2(p+1)}{p-1}d, \quad 0 \leq t<\infty,
	%\end{equation}
	%	\begin{equation}\label{intu}
	%	\int\limits_0^t ||u_{mt}||^2d t <\delta, \quad 0 \leq t<\infty,
	%\end{equation}
%	On the other hand, from the fact  $v_m(\cdot,t) \in W$, we get $I(v_m(\cdot,t))>0$, or
	
%	\begin{align}
%		\int\limits_{{U}} |v_m(x,t)|^{p+1}\log|v_m(x,t)| dx  &< ||\nabla v_m(\cdot,t)||^{2} \nonumber
%		\\
%		& <\frac{2(p+1)}{p-1}d, \quad 0 \leq t<\infty. \label{logu}
%	\end{align}
\noindent Let  ${U}_1= \{(x,t) \in {U} : |v_m(x,t)| \geq 1\}$. For the choice $r =1+\frac{1}{2p+1}>1$, and using \eqref{approx} --\eqref{new} and Lemma \ref{lemma_log},  we deduce % we have % and observe that
\begin{align}
	\int\limits_{{U}_1} (|v_m(x,t)|^{p}\log|v_m(x,t)|) ^r dx  & \leq  \int\limits_{{U}_1} (\frac{2}{e}|v_m(x,t)|^{p+\frac{1}{2}}) ^r dx \nonumber
	\\
	& = \big(\frac{2}{e}\big)^r \int\limits_{{U}_1} |v_m(x,t)| ^{p+1} dx \nonumber
	\\
	& \leq  ||v_m||_{p+1}^{p+1} \big(\frac{2}{e}\big)^r \nonumber
	\\
	& \leq  \delta (p+1)^2 \big(\frac{2}{e}\big)^r, \quad 0 \leq t<\infty.  \label{logu_int2}
\end{align}
Moreover, for ${U}_2 =\{(x,t) \in {U} : |v_m(x,t)| < 1\}$, we estimate
\begin{align}
	\int\limits_{{U}_2} (-|v_m(x,t)|^{p}\log|v_m(x,t)|)^r dx  &  \leq \frac{meas({U}_1)}{(pe)^r}, \quad 0 \leq t<\infty. \label{logu_int1}
\end{align}
Now using \eqref{logu_int1}--\eqref{logu_int2}, one can easily get that
	\begin{align}
		\int\limits_{{U}} \big||v_m(x,t)|^{p+1} \log|v_m(x,t)| \big|^rdx  &  \leq \frac{meas({U}_2)}{(pe)^r} +  \delta (p+1)^2\big(\frac{2}{e}\big)^r, \quad 0 \leq t<\infty. \label{logu_int3}
	\end{align}
	From \eqref{newapp} and \eqref{logu_int3} it follows that there exist a function $v$ and a subsequence of $(v_m),$ which is still denoted by $(v_m)$, such that
	\begin{equation}
		\left\{
		\begin{aligned}
			&v_m \overset{w^*}{\rightharpoonup} v\ \textrm{in} \ L^\infty  (0, \infty ; H^1_0({U})) \ \textrm{and}\ \textrm{a.e.}\ \textrm{in}\  {U} \times [0,\infty),
			\\
			&\frac{\partial v_{m}}{\partial t} \overset{w^*}{\rightharpoonup} v_t\ \textrm{in} \ L^2(0,\infty ; L^2({U})),
			\\
			&v_m|v_m|^{p-1}\log |v_m| \overset{w^*}{\rightharpoonup} v|v|^{p-1}\log |v| \ \textrm{in} \ L^\infty  (0, \infty ; L^r({U})),
		\end{aligned}
		\right.
	\end{equation}
	as $m \to \infty$. In \eqref{approximate}, for fixed  $k$, let $m \to \infty$ to deduce
	\begin{equation*}
		(\nabla v, \nabla w_k)_2+(v_{t}, w_k)_2=(v|v|^{p-1}\log|v|, w_k)_2.
	\end{equation*}
	From \eqref{approx_initial}, we have $v(x,0)=v_0(x)$ in $H_0^1({U})$. Hence $v$ is a global weak solution to \eqref{main}. Moreover, we get $v(x,t)\in W$ for $0\leq t<\infty$ from Theorem \ref{invariant}.\\
	\textbf{Step 2}: In this step, we prove decay estimate \eqref{decay}.\\
	From the fact that $I(v(\cdot, t))>0$, we have $0< J(v(\cdot, t))<\delta$. Therefore from Theorem \ref{invariant}, we get $I_\rho (v(\cdot, t)) >0$ for $0<\rho<1$. In other words, if $1-\beta=\rho$, then it follows that
	\begin{equation*}
		\int\limits_{{U}} |v(x,t)|^{p+1} \log |v(x,t)| dx<(1-\beta)||\nabla v(\cdot, t)||^2,
	\end{equation*}
	or
	\begin{equation}\label{loguu}
		\beta ||\nabla v(\cdot, t)||^2 < I(v(\cdot, t)).
	\end{equation}
	On the other hand, it is easy to observe that
	\begin{equation}\label{U_derivative}
		\frac{d}{dt} ||v(\cdot, t)||^2=-2I(v(\cdot, t)).
	\end{equation}
	Now using \eqref{loguu}--\eqref{U_derivative},  we obtain
	\begin{equation}\label{u_p}
		\begin{aligned}
			\frac{d}{dt} ||v(\cdot, t)||^2=-2I(v(\cdot, t))<-2 \beta ||\nabla v(\cdot, t)||^2 \leq -2\beta  \lambda_1 ||v(\cdot, t)||^2,
		\end{aligned}
	\end{equation}
	where $\lambda_1$ is the optimal  constant in the Poincar\'e inequality. Finally, Gronwall's lemma gives
	\begin{equation*}
		||v(\cdot, t)|| \leq  ||v_0|| e^{-\gamma t},\ t \geq 0,
	\end{equation*}
	where $\gamma= {\beta \lambda_1}$. This completes the proof.
\end{proof}
%%%%%%%%%%%%%%%%%%%%%%%%%%%%%%%%%%%%%
%%%%%%%%%%%%%%%%%%%%%%%%%%%%%%%%%%%
We next prove a   decay estimate of the solutions to \eqref{main} which is  stronger than the one given in  \eqref{decay}. In particular, we prove exponential decay of $H^1_0({U})$ norm of $v$.
\begin{proposition}\label{nabla_bound}
	Let the  power index $p$ in the source term satisfy
	%Let  the power index $p$ of the source term satisfy
	\begin{equation}\label{p}
		\begin{aligned}
			1<p< 
			\begin{cases}
				\infty, & \text{if } n \leq 2,\\
				\frac{n+2}{n-2}, & \text{if } n>2.
			\end{cases}
		\end{aligned}
	\end{equation}
	Assume that  $I(v_0)>0$ and $J(v_0)<\delta$,  then there exist constants $K>0$ and $\gamma >0$  such that the weak solution to \eqref{main} satisfies 
	\begin{equation}\label{decay_2}
		||\nabla v(\cdot, t)||\leq K e^{-\gamma t}, \quad 0 \leq t < \infty.
	\end{equation}
\end{proposition}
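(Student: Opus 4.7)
The plan is to apply Komornik's integral inequality (Lemma \ref{lemma_ineq}) to $f(t):=J(v(\cdot,t))$, since the bound $J(v)\geq \frac{p-1}{2(p+1)}\|\nabla v\|^2$ from \eqref{combo} (valid in $W$, where $I(v)>0$ and all three summands are non-negative) turns exponential decay of $J(v(\cdot,t))$ into exponential decay of $\|\nabla v(\cdot,t)\|$. Monotonicity of $f$ is immediate from the energy identity \eqref{energy_inequality}, which gives $\frac{d}{dt}J(v(\cdot,t))=-\|v_t(\cdot,t)\|^2\leq 0$. The real content is the integral inequality $\int_t^\infty J(v(\cdot,\tau))\,d\tau\leq K\,J(v(\cdot,t))$.

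To obtain it I plan to chain three estimates. First, integrating \eqref{U_derivative} from $t$ to $T$ and letting $T\to\infty$ (using non-negativity of $\|v\|^2$) yields
\begin{equation*}
\int_t^\infty I(v(\cdot,\tau))\,d\tau\leq \tfrac{1}{2}\|v(\cdot,t)\|^2.
\end{equation*}
Second, I would prove a pointwise domination $J(v(\cdot,t))\leq C_1\, I(v(\cdot,t))$ for all $t\geq 0$. Using \eqref{combo} this reduces to controlling each of $\|\nabla v\|^2$ and $\|v\|_{p+1}^{p+1}$ by $I(v)$. The first one follows from Theorem \ref{invariant}(i): since $v(\cdot,t)\in W_\rho$ for some $\rho<1$, writing $\beta=1-\rho>0$ gives $\beta\|\nabla v\|^2<I(v)$, exactly as in \eqref{loguu}. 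For the second, the subcritical hypothesis \eqref{p} gives the Sobolev embedding $H_0^1(U)\hookrightarrow L^{p+1}(U)$, so $\|v\|_{p+1}^{p+1}\leq C_{\mathrm{S}}^{p+1}\|\nabla v\|^{p+1}$. Combining with the a priori bound $\|\nabla v(\cdot,t)\|^2\leq \frac{2(p+1)}{p-1}J(v_0)=:M^2$ coming from \eqref{combo} and conservation of energy, I can factor $\|\nabla v\|^{p+1}=\|\nabla v\|^{p-1}\cdot\|\nabla v\|^2\leq M^{p-1}\|\nabla v\|^2\leq \frac{M^{p-1}}{\beta}I(v)$. Plugging both bounds back into \eqref{combo} gives the desired $J(v)\leq C_1\,I(v)$.

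Third, Poincar\'e's inequality combined with \eqref{combo} yields the reverse-direction bound
\begin{equation*}
\|v(\cdot,t)\|^2\leq \tfrac{1}{\lambda_1}\|\nabla v(\cdot,t)\|^2\leq \tfrac{2(p+1)}{(p-1)\lambda_1}\,J(v(\cdot,t)).
\end{equation*}
Chaining the three estimates produces
\begin{equation*}
\int_t^\infty J(v(\cdot,\tau))\,d\tau\leq C_1\int_t^\infty I(v(\cdot,\tau))\,d\tau\leq \tfrac{C_1}{2}\|v(\cdot,t)\|^2\leq K\,J(v(\cdot,t)),
\end{equation*}
with $K=\frac{C_1(p+1)}{(p-1)\lambda_1}$. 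Lemma \ref{lemma_ineq} then gives $J(v(\cdot,t))\leq J(v_0)e^{1-t/K}$, and the lower bound $J(v)\geq \frac{p-1}{2(p+1)}\|\nabla v\|^2$ converts this to \eqref{decay_2} with $\gamma=\frac{1}{2K}$ and a suitable constant.

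The main obstacle is the pointwise comparison $J(v)\leq C_1\,I(v)$; this is precisely where the subcritical restriction \eqref{p} enters, through the Sobolev embedding needed to control $\|v\|_{p+1}^{p+1}$ by a power of $\|\nabla v\|$. The uniform-in-time bound $\|\nabla v\|\leq M$, which lets me absorb the extra factor $\|\nabla v\|^{p-1}$, is crucial for the argument to close; without it the inequality would only hold for small $t$ rather than globally.
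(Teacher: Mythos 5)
Your proposal is correct and follows essentially the same route as the paper's proof: the pointwise comparison $J(v)\leq C\,I(v)$ obtained from \eqref{combo}, the bound $\beta\|\nabla v\|^2<I(v)$, the Sobolev embedding combined with the a priori bound $\|\nabla v\|^2\leq \frac{2(p+1)}{p-1}J(v_0)$, then the integral inequality $\int_t^\infty J\,d\tau\leq K\,J(v(\cdot,t))$ fed into Lemma \ref{lemma_ineq}. The only (immaterial) difference is that you close the chain via Poincar\'e applied directly to $\|v\|^2\leq \frac{2(p+1)}{(p-1)\lambda_1}J(v)$, whereas the paper routes this last step back through $I(v)$.
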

\begin{proof}
	In view of Theorem \ref{them1}, there exists a global solution to \eqref{main}. Let $v$ be  a global solution to \eqref{main}. 
	From conservation of energy \eqref{energy_inequality}, and using the fact that $v(\cdot,t) \in W, \ t\geq 0$, we deduce
	%	\begin{equation}
	\begin{align}
		J(v_0)\geq J(v(\cdot,t))&=\frac{1}{(p+1)^2}||v(\cdot, t)||_{p+1}^{p+1}+ \frac{1}{p+1}I(v(\cdot, t))+ \frac{p-1}{2(p+1)}||\nabla v(\cdot, t)||^2 \nonumber
		\\
		& \geq \frac{p-1}{2(p+1)}||\nabla v(\cdot, t)||^2. \label{j0}
	\end{align}
	%	\end{equation}
	From the hypothesis on $p$, and the Sobolev embedding theorem there exists $C_1>0$ such that 
	\begin{equation}\label{u_p+1}
		\begin{aligned}
			||v(\cdot, t)||_{p+1}\leq C_1||\nabla v(\cdot, t)||,\ t \geq 0.
		\end{aligned}
	\end{equation}
	Now using \eqref{loguu}, \eqref{u_p+1} and \eqref{j0}, we obtain
	\begin{equation}\label{u_p}
		\begin{aligned}
			||v(\cdot, t)||^{p+1}_{p+1} \leq C_1^{p+1} \left(\frac{2(p+1)}{p-1}J(v_0)\right)^{\frac{p-1}{2}}||\nabla v(\cdot, t)||^2<\theta I(v(\cdot, t)),
		\end{aligned}
	\end{equation}
	where $\theta=\frac{C_1^{p+1}}{\beta} \left(\frac{2(p+1)}{p-1}J(v_0)\right)^{\frac{p-1}{2}}$. 
	By \eqref{combo}, \eqref{loguu} and \eqref{u_p}, we get
	\begin{equation}\label{jtotal}
		\begin{aligned}
			J(v(\cdot, t)) &< \left(\frac{\theta}{ (p+1)^2} +\frac{1}{p+1}+  \frac{p-1}{2\beta (p+1)}\right) I(v(\cdot, t))=C I(v(\cdot, t)),
		\end{aligned}
	\end{equation}
	where $C=\left( \frac{p-1}{2\beta (p+1)}+\frac{1}{p+1}+\frac{\theta}{(p+1)^2}\right)$.
	Since  $\frac{d}{dt} ||v||^2=-2I(v(\cdot, t))$,  integration over $[t,T]$ yields
	\begin{equation}\label{iint}
		\begin{aligned}
			\int\limits_t^T I(v(\cdot, t))dt &=\frac{1}{2}\int\limits_{{U}} |v(\cdot, t)|^2 dx-\frac{1}{2}\int\limits_{{U}} |v(\cdot, T)|^2 dx
			\leq \frac{1}{2}||v(\cdot, t)||^2
			\leq \frac{1}{2 \lambda_1} ||\nabla v(\cdot, t)||^2,
		\end{aligned}
	\end{equation}
	where $\lambda_1$ is the optimal constant in  the Poincar\'e inequality.
	By \eqref{loguu} and \eqref{iint}, we have
	\begin{equation*}
		\begin{aligned}
			\int\limits_t^T I(v(\cdot, t))dt  < \frac{1}{2\lambda_1 \beta}I(v(\cdot, t)), \ T>0,
		\end{aligned}
	\end{equation*}
	which immediately gives 
	\begin{equation}\label{total}
		\begin{aligned}
			\int\limits_t^\infty I(v(\cdot, t))dt  < \frac{1}{2\lambda_1 \beta}I(v(\cdot, t)).
		\end{aligned}
	\end{equation}
	%which immediately gives 
	%$$\int_t^\infty I(v(\cdot, t)) dt < AI(v(\cdot, t)), \ t \geq 0.$$ 
	Now  \eqref{combo} and \eqref{jtotal} together imply
	\begin{equation*}
		\frac{1}{C}\int_t^\infty J(v(\cdot, t)) dt <\frac{p+1}{2 \lambda_1 \beta}J(v(\cdot, t)), \ 0\leq t \leq T.
	\end{equation*}
	Now choose $A=\frac{C(p+1)}{2 \lambda_1 \beta}$ and observe  from \eqref{energy_inequality} that $t \mapsto J(v(\cdot,t))$ is a non-increasing function. Due to  Lemma \ref{lemma_ineq}, one can arrive at
	$$J(v(\cdot, t)) \leq e^{1-\frac{t}{A}} J(v_0), \ t \geq 0.$$
	Using \eqref{combo} and \eqref{loguu}, one can conclude that there exist constants $K>0$ and $\gamma >0$ such that
	$$||\nabla v(\cdot,t)||\leq K e^{-\gamma t},\    t \geq 0.$$
	This completes the proof.
\end{proof}
%%%%%%%%%%%%%%%%%%%%%%%%%%%%%%%%%%%%%%%%%%%%%
%%%%%%%%%%%%%%%%%%%%%%%%%%%%%%%%%%%%%%%%%%%%
\begin{theorem}\label{them2}
	Let  $I(v_0)<0$ and $J(v_0)<\delta$. Then any weak solution to \eqref{main} blows up in finite time, i.e., there exists $T>0$ such that $$\lim\limits_{t \to T^{-}} ||v(\cdot,t)||=\infty.$$
\end{theorem}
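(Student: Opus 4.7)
The plan is to apply the concavity method of Levine, adapted to the parabolic logarithmic setting. Let $[0, T^*)$ denote the maximal existence interval; I shall show $T^* < \infty$ with $\|v(\cdot,t)\|\to\infty$ as $t\uparrow T^*$. The starting point is the sign information $I(v(\cdot,t))<0$ on $[0,T^*)$, which in turn makes $t\mapsto\|v(\cdot,t)\|^2$ strictly increasing via $\tfrac{d}{dt}\|v\|^2 = -2I(v)$. When $J(v_0)\leq 0$ this sign follows directly from \eqref{combo} (since then $I(v)\leq -\tfrac{p-1}{2}\|\nabla v\|^2 < 0$ as long as $v\not\equiv 0$); when $0<J(v_0)<\delta$ it follows from Theorem \ref{invariant} together with a contradiction argument, as any first zero $t_0$ of $I(v(\cdot,\cdot))$ would place $v(\cdot,t_0)$ on $\mathcal{N}$, forcing $J(v(\cdot,t_0))\geq \delta$ against energy conservation.

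Next, combining the decomposition \eqref{combo} of $J$ with energy conservation \eqref{energy_inequality} yields the identity
$$-2I(v(\cdot,t)) = (p-1)\|\nabla v(\cdot,t)\|^2 + \tfrac{2}{p+1}\|v(\cdot,t)\|_{p+1}^{p+1} + 2(p+1)\int_0^t \|v_t(\cdot,\tau)\|^2 d\tau - 2(p+1)J(v_0).$$
I would then introduce the auxiliary functional
$$F(t) := \int_0^t \|v(\cdot,s)\|^2 ds + (T_0-t)\|v_0\|^2 + \beta(t+\sigma)^2,$$
with positive parameters $T_0,\sigma,\beta$ to be fixed. Differentiating gives $F'(t) = 2\int_0^t (v,v_t)_2 ds + 2\beta(t+\sigma)$ and $F''(t) = -2I(v(\cdot,t)) + 2\beta$. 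An application of Cauchy--Schwarz in the form $(ab+cd)^2 \leq (a^2+c^2)(b^2+d^2)$ produces the bound $(F'/2)^2 \leq F(t)\bigl(\int_0^t\|v_t\|^2 d\tau + \beta\bigr)$, and inserting this into $FF''$ yields
$$FF''(t) - \tfrac{p+1}{2}(F'(t))^2 \geq F(t)\bigl[(p-1)\|\nabla v\|^2 + \tfrac{2}{p+1}\|v\|_{p+1}^{p+1} - 2(p+1)J(v_0) - 2p\beta\bigr].$$

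The final step is to choose the parameters so that the bracket on the right is non-negative throughout $[0,T^*)$. For $J(v_0)\leq 0$ the choice $\beta=0$ suffices. In the harder range $0<J(v_0)<\delta$, the family of outer potential wells $V_\rho$ from Theorem \ref{invariant} supplies the extra coercivity: for any fixed $\rho\in(1,\rho_2)$ one has $I_\rho(v)<0$ and hence $(\rho-1)\|\nabla v\|^2 < -I(v)$ uniformly in $t$, which combined with Poincar\'e's inequality $\|\nabla v\|^2\geq \lambda_1\|v\|^2\geq\lambda_1\|v_0\|^2$ lets one pick $\beta>0$ small enough that the bracket stays non-negative. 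Once $FF'' \geq \tfrac{p+1}{2}(F')^2$ holds, $F^{-(p-1)/2}$ is concave on $[0,T^*)$; since $F(0)>0$ and $F'(0) = 2\beta\sigma > 0$, this positive concave function must vanish at some finite $T_1\leq T_0$. Therefore $F(t)\to\infty$ as $t\uparrow T_1$, forcing $\|v(\cdot,t)\|\to\infty$ at or before $T_1$. The hardest part is precisely this parameter calibration for $0<J(v_0)<\delta$; leveraging the family $V_\rho$ (rather than the single well $V$) together with Poincar\'e's inequality is what allows the argument to go through uniformly in $p$ without any Sobolev-type restriction.
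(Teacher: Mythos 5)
Your overall architecture---the Levine concavity functional $F(t)=\int_0^t\|v\|^2\,ds+(T_0-t)\|v_0\|^2+\beta(t+\sigma)^2$, the identity $-2I(v)=(p-1)\|\nabla v\|^2+\tfrac{2}{p+1}\|v\|_{p+1}^{p+1}+2(p+1)\int_0^t\|v_t\|^2\,d\tau-2(p+1)J(v_0)$ obtained from \eqref{combo} and \eqref{energy_inequality}, and the Cauchy--Schwarz reduction to $FF''-\tfrac{p+1}{2}(F')^2\ge F\cdot[\,\cdot\,]$---is sound and is essentially the same concavity method as the paper's, which works with the bare $N(t)=\int_0^t\|v\|^2\,d\tau$ and establishes $N\ddot N-\tfrac{p+1}{2}\dot N^2>0$ only for large $t$. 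The problem is the step you yourself flag as the hardest. For $0<J(v_0)<\delta$ you need the bracket $(p-1)\|\nabla v\|^2+\tfrac{2}{p+1}\|v\|_{p+1}^{p+1}-2(p+1)J(v_0)-2p\beta$ to be nonnegative on all of $[0,T^*)$, and you justify this by $I_\rho(v)<0$ together with Poincar\'e, i.e.\ by $\|\nabla v(\cdot,t)\|^2\ge\lambda_1\|v(\cdot,t)\|^2\ge\lambda_1\|v_0\|^2$. That does not close the gap: Theorem \ref{them2} carries no largeness assumption on $\|v_0\|$ (contrast hypothesis $(ii)$ of the supercritical theorem in Section \ref{sec:finite_time_blow}, which is exactly such a condition), so $(p-1)\lambda_1\|v_0\|^2$ may be far smaller than $2(p+1)J(v_0)$, and shrinking $\beta$ only controls the $-2p\beta$ term, not the $-2(p+1)J(v_0)$ term. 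What $-I(v)>(\rho-1)\lambda_1\|v_0\|^2>0$ actually buys is linear growth of $\|v(\cdot,t)\|^2$, hence nonnegativity of the bracket only for $t$ large---which is precisely why the paper runs its convexity argument only for sufficiently large $t$.

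Two standard repairs exist. (a) Prove the coercivity statement that $I(v)<0$ forces $\tfrac{p-1}{2(p+1)}\|\nabla v\|^2+\tfrac{1}{(p+1)^2}\|v\|_{p+1}^{p+1}>\delta$: by Lemma \ref{lemma1} there is $\zeta^*<1$ with $\zeta^*v\in\mathcal{N}$, so $\delta\le J(\zeta^*v)=\tfrac{p-1}{2(p+1)}(\zeta^*)^2\|\nabla v\|^2+\tfrac{(\zeta^*)^{p+1}}{(p+1)^2}\|v\|_{p+1}^{p+1}$, and $\zeta^*<1$ gives the claim; the bracket is then at least $2(p+1)\bigl(\delta-J(v_0)\bigr)-2p\beta>0$ for small $\beta$, and your global argument goes through. (b) Drop the correction terms and, as in the paper, prove the concavity inequality only for $t\ge t_1$ using the linear and quadratic growth of $\dot N$ and $N$. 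Two smaller points: with $\beta=0$ in the case $J(v_0)\le0$ you have $F'(0)=0$, so the concavity argument must be restarted at a time where $F'>0$ (such a time exists since $F''=-2I>0$); and a first zero $t_0$ of $I$ places $v(\cdot,t_0)$ on $\mathcal{N}$ only after checking $\|\nabla v(\cdot,t_0)\|\neq0$, which follows because $\|v(\cdot,t)\|^2$ is increasing on $[0,t_0)$ and $\|v_0\|>0$.
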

\begin{proof}
	Let $v(x,t)$ be any weak solution to \eqref{main} with  $I(v_0)<0$ and $J(v_0)<\delta$.\\
	Define the function ${N} \colon [0,\infty) \to \mathbb{R}^+$  by ${N}(t)=\int\limits_0^t ||v(\cdot, \tau)||^2 d \tau $. Then an easy computation yields 
	\begin{equation}\label{m_derivative}
		\dot{{N}}(t)=||v(\cdot, t)||^2,\ \ddot{{N}}(t)=-2I(v(\cdot, t)).
	\end{equation}
	From \eqref{combo}, conservation of energy \eqref{energy_inequality}, and the Poincar\'{e} inequality, there exists $\lambda_1 >0$ such that 
	%	\begin{equation}
	\begin{align}
		\ddot{{N}}(t)&=(p-1)||\nabla v(\cdot,t)||^2-2(p+1)J(v(\cdot, t))+\frac{2}{p+1}||v(\cdot,t)||^{p+1}_{p+1} \nonumber
		\\
		&\geq (p-1)\lambda_1\dot{{N}}(t)+2(p+1)\int\limits_0^t ||v_t(\cdot, \tau)||^2 d \tau -2(p+1)J(v_0). \label{mdd}
	\end{align}
	%	\end{equation}
	Since 
	%	\begin{equation}
	\begin{align}
		\left(\int\limits_0^t (v_t (\cdot, \tau), v(\cdot, \tau))_2 d \tau \right)^2&=\left(\frac{1}{2}\int\limits_0^t \frac{d}{d t} ||v(\cdot,\tau)||^2 d \tau \right)^2 \nonumber
		\\
		&=\frac{1}{4}\left( \dot{{N}}^2(t)-2||v_0||^2 \dot{{N}}(t)+||v_0||^4\right), \label{mint}
	\end{align}
	%	\end{equation}
	we have
	\begin{equation*}
		\begin{aligned}
			&{N}\ddot{{N}}- \frac{p+1}{2}\dot{{N}}^2
			\\
			&\geq 2(p+1) \left\{ \int\limits_0^t ||v(\cdot, \tau)||^2 d \tau \int\limits_0^t ||v_t(\cdot, \tau)||^2 d \tau -\left(\int\limits_0^t (v_t(\cdot, \tau), v(\cdot, \tau))_2 d \tau\right)^2\right\}
			\\
			&\quad \quad -2(p+1)J(v_0){N}
			+(p-1)\lambda_1 {N} \dot{{N}}-(p+1)||v_0||^2 \dot{{N}}+\frac{p+1}{2}||v_0||^4.
		\end{aligned}
	\end{equation*}
	By H\"older's inequality, one can deduce that 
	\begin{equation}\label{bode}
		\begin{aligned}
			{N}\ddot{{N}}- \frac{p+1}{2}\dot{{N}}^2 &\geq (p-1)\lambda_1 {N} \dot{{N}}
			-(p+1)||v_0||^2 \dot{{N}}-2(p+1)J(v_0){N}.
		\end{aligned}
	\end{equation}
	\textbf{Claim:} For large enough $t>0$, it follows that
	%For sufficiently large $t$, it follows that
	\begin{equation}\label{ode}
		{N}\ddot{{N}}- \frac{p+1}{2}\dot{{N}}^2>0.
	\end{equation}
	To prove this claim, we consider two cases and argue separately.\\
	\textbf{Case-1:} Assume  $J(v_0)\leq 0.$ From  \eqref{mdd}, we get 
	$\ddot{{N}} \geq 0$. Since $\dot{{N}}(t)=||v(\cdot,t)||^2\geq 0$, then there exists  $t_0 \geq 0$ such that $\dot{{N}}(t_0)>0$ and 
	$${N}(t)\geq {{N}}(t_0)+\dot{{N}}(t_0)(t-t_0)> \dot{{N}}(t_0)(t-t_0), \quad t\geq t_0.$$
	Thus 
	%for sufficiently  large $t$,
	 we get $(p-1)\lambda_1 {N}>(p+1)||v_0||^2$,  whenever $t$ is large enough and \eqref{ode} follows immediately from \eqref{bode}.
	\\
	\textbf{Case-2:} Assume that $0<J(v_0)<\delta.$ 
	From Theorem \ref{invariant}, it is straightforward to obtain  $v(\cdot,t) \in V_\rho $ for $t>0$, $1< \rho<\rho_2$,  where $\rho_2$ is the same as the one introduced in  Theorem \ref{invariant}. In other words, we get  $I_{\rho}(v(\cdot,t))<0$,  for  $ t \geq 0$, $1< \rho<\rho_2$. Next we prove  that $||\nabla v(\cdot,t) ||^2 > {\lambda_1}||v_0||^2>0,\ t \geq 0$. For, since $I(v(\cdot, t))<0$, from \eqref{U_derivative} we deduce that $t \mapsto ||v(\cdot, t)||^2, \ t \geq 0$ is a strictly increasing function. On the other hand, the Poincar\'e inequality  gives $||\nabla v(\cdot, t)||^2 \geq \lambda_1 ||v(\cdot, t)||^2> \lambda_1||v_0||^2>0$.
	From  \eqref{m_derivative} and the definition of $I_{\rho}$, we find that 
	\begin{equation*}
		\ddot{{N}}(t)= 2(\rho-1)||\nabla v(\cdot,t)||^2-2I_{\rho}(v(\cdot,t))\geq2(\rho-1)\lambda_1||v_0||^2> 0,
	\end{equation*}
	\begin{equation*}
		\dot{{N}}(t)\geq 2(\rho-1)\lambda_1||v_0||^2t+\dot{{N}}(0)\geq  2(\rho-1)\lambda_1||v_0||^2t,
	\end{equation*}
	and 
	\begin{equation*}
		{{N}}(t)\geq (\rho-1)\lambda_1||v_0||^2 t^2+{{N}}(0)\geq  (\rho-1)\lambda_1||v_0||^2.
	\end{equation*}
	Thus we have
	\begin{equation}\label{Ode_inequa}
		\left\{
		\begin{aligned}
			(p-1)\lambda_1 {N}(t)>2(p+1)||v_0||^2,
			\\
			(p-1)\lambda_1\dot{{N}}(t)>4(p+1)J(v_0),
		\end{aligned}
		\right.
	\end{equation}
for sufficiently large $t$.
	On substituting \eqref{Ode_inequa} in  \eqref{bode}, we conclude that  \eqref{ode} holds for all sufficiently large $t$ which proves Claim.\\
	On the other hand, observe that 
	\begin{equation*}
		\frac{d^2}{dt^2} \left({N}^{-\frac{p-1}{2}}\right)	=-\frac{p-1}{2} {N}^{-\frac{p+3}{2}} \left({N}\dot{{N}}-\frac{p+1}{2}\dot{{N}}^2\right)<0,
	\end{equation*}
	for all sufficiently large $t$ due to \eqref{ode}. Hence for $t >\tilde{t}$, we can write 
	\begin{equation*}
		{N}^{-\frac{p-1}{2}}(t) < {N}^{-\frac{p-1}{2}}(\tilde{t})\left(1-\left(\frac{p-1}{2}\right)\frac{\dot{{N}}(\tilde{t})}{{N}(\tilde{t})}(t-\tilde{t})\right),
	\end{equation*} 
	which implies that there exists $T>0$ such that 
	$$\lim\limits_{t \to T^{-}}{N}^{-\frac{p-1}{2}}(t)=0,$$
	which completes the proof.
\end{proof}
%%%%%%%%%%%%%%%%%%%%%%%%%%%%%%%%%%%%%%%%%%%%%%%%%%
%%%%%%%%%%%%%%%%%%%%%%%%%%%%%%%%%%%%%%%%%%%%%%%%%% 
%%%%%%%%%%%%%%%%%%%%%%%%%%%%%%%%%%%
%%%%%%%%%%%%%%%%%%%%%%%%%%%%%%%%%%%%%%%%%%%%%%%%%%
%%%%%%%%%%%%%%%%%%%%%%%%%%%%%%%%%%%%%%%%%%%%%%%%%%
\section{The critical case $J(v_0)=\delta$}\label{sec:The critical case}
In this section, we discuss the global existence and finite time blow up of the solutions to \eqref{main} at the critical initial energy level $J(v_0)=\delta$.
%\subsection{Global existence at $J(v_0)=\delta$}
\begin{theorem}\label{them3}
	Assume that   $I(v_0)\geq 0$ and $J(v_0)=\delta$, then problem \eqref{main} admits a global weak solution $v \in L^\infty (0,\infty; H^1_0({U}))$. Moreover, we have $v(\cdot, t) \in \overline{W}$ for $0\leq t<\infty$. Furthermore, if $I(v_0)>0$ then there exist two positive constants $C$ and $\gamma $ such that
	\begin{equation}\label{decay_2nd}
		||v(\cdot,t)||\leq C e^{-\gamma t}, \quad 0 \leq t < \infty.
	\end{equation}
\end{theorem}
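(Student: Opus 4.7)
The plan is to reduce the critical case to the subcritical case handled in Theorem \ref{them1}, via an approximation of the initial datum through dilations. For $\lambda \in (0,1)$, set $v_0^\lambda := \lambda v_0$, and let $\zeta^* = \zeta^*(v_0)$ be the unique positive maximizer of $\zeta \mapsto J(\zeta v_0)$ furnished by Lemma \ref{lemma1}. Since $I(v_0) \geq 0$, part $(iv)$ of that lemma forces $\zeta^* \geq 1$, so for every $\lambda \in (0,1)$ we have $\lambda < 1 \leq \zeta^*$, and parts $(iii)$--$(iv)$ yield simultaneously $J(v_0^\lambda) < J(v_0) = \delta$ and $I(v_0^\lambda) > 0$. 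Theorem \ref{them1} therefore applies to each $v_0^\lambda$ and produces a global weak solution $v_\lambda \in L^\infty(0,\infty;H_0^1(U))$ with $v_\lambda(\cdot,t) \in W$ and the uniform bound
\begin{equation*}
\int_0^t ||\partial_\tau v_\lambda(\cdot,\tau)||^2 \, d\tau + \frac{p-1}{2(p+1)}||\nabla v_\lambda(\cdot,t)||^2 \leq J(v_0^\lambda) < \delta.
\end{equation*}

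Taking a sequence $\lambda_k \nearrow 1$ we have $v_0^{\lambda_k} \to v_0$ in $H_0^1(U)$, and the compactness and passage-to-the-limit argument from Step 1 of Theorem \ref{them1} goes through verbatim with $v_k$ in place of $v_m$: the uniform $L^{p+1}$ control combined with Lemma \ref{lemma_log} gives a uniform bound on $v_k|v_k|^{p-1}\log|v_k|$ in $L^\infty(0,\infty;L^r(U))$ for $r = 1 + \frac{1}{2p+1}$, an Aubin--Lions type argument supplies a.e.\ convergence of a subsequence, and the weak-$*$ limit $v$ is a global weak solution to \eqref{main}. The inclusion $v(\cdot,t) \in \overline{W}$ then follows from the weak-$*$ limit of $v_k(\cdot,t) \in W$ together with the lower semicontinuity of $J$ and $I$.

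For the exponential decay under the stronger hypothesis $I(v_0) > 0$, the strategy is to produce a time $t_0 > 0$ at which the \emph{strictly} subcritical conditions of Theorem \ref{them1} are recovered, and then to apply the decay proof of that theorem to the time-shifted solution $\tilde v(\cdot,s) := v(\cdot,s+t_0)$. The key observation is that $v$ cannot be stationary on any right neighborhood of $0$: if $v \equiv v_0$ on $[0,t_0]$ then $-\Delta v_0 = v_0|v_0|^{p-1}\log|v_0|$, and testing with $v_0$ yields $I(v_0) = 0$, contradicting the assumption. Hence $\int_0^{t_0}||\partial_\tau v(\cdot,\tau)||^2\,d\tau > 0$ for some $t_0 > 0$, and the energy identity \eqref{energy_inequality} gives $J(v(\cdot,t_0)) < \delta$. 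Since $v(\cdot,t_0) \in \overline{W}$ implies $I(v(\cdot,t_0)) \geq 0$, and the equality $I(v(\cdot,t_0)) = 0$ would either place $v(\cdot,t_0) \in \mathcal{N}$ (forcing $J(v(\cdot,t_0)) \geq \delta$) or give $v(\cdot,t_0) \equiv 0$ (trivializing the decay), we conclude $I(v(\cdot,t_0)) > 0$. Applying \eqref{decay} to $\tilde v$ yields $||v(\cdot,t)|| \leq C' e^{-\gamma(t-t_0)}$ for $t \geq t_0$, and absorbing the bounded piece on the compact interval $[0,t_0]$ into the constant produces \eqref{decay_2nd}.

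The principal technical obstacle is the passage to the limit in the log-polynomial nonlinearity, which is not Lipschitz near zero; this is addressed by the same Lemma \ref{lemma_log}--based uniform $L^r$ estimate used in Theorem \ref{them1}. A more delicate conceptual point is that the dilation approximation gives only the weaker invariance $v(\cdot,t) \in \overline{W}$ at the critical level, which precludes the direct application of the invariance result Theorem \ref{invariant} needed in the subcritical decay proof; this is precisely what forces the two-stage argument—first dissipating the energy strictly below $\delta$ at time $t_0$, then invoking the subcritical decay on the shifted solution.
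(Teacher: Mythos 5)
Your proposal is correct and follows essentially the same route as the paper: the paper likewise approximates $v_0$ by the dilations $\mu_m v_0$ with $\mu_m = 1-\tfrac{1}{m} \nearrow 1$, applies Theorem \ref{them1} to each subcritical datum, and passes to the weak-$*$ limit to obtain a global solution in $\overline{W}$. For the decay, the paper's two-case analysis (either $I(v(\cdot,t))>0$ for all $t$, or $I$ first vanishes at some $t_1$ where necessarily $\|\nabla v(\cdot,t_1)\|=0$) is mathematically the same as your time-shift argument, both resting on the strict energy dissipation that forces $J(v(\cdot,t_0))<\delta$ and on the definition of $\delta$ to exclude a nontrivial zero of $I$.
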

\begin{proof}
	Let $\mu_m =1-\frac{1}{m}$ and $v_{0m}=\mu_m v_0$, $m=2,3,\dots$. We consider the following problem 
	\begin{equation}\label{new_eq}
		\left\{
		\begin{aligned}
			&v_t-\rho v=v|v|^{p-1}\log|v|, &&x \in {U}, t>0,
			\\
			&v(x,t)=0, &&x \in \partial {U}, t>0,
			\\
			&v(x,0)=v_{0m}(x), && x \in {U}.
		\end{aligned}
		\right.
	\end{equation}
	Since $I(v_0) \geq 0$, in view of Lemma \ref{lemma1}, we have $\mu^*=\mu^*(v_0) \geq 1$. This immediately gives $I(v_{0m})>0$ and $J(v_{0m})=J(\mu_m v_0)< J(v_0)$. Moreover, we notice that 
	\begin{equation*}
		\begin{aligned}
			J(v_{0m})=\frac{p-1}{2(p+1)}||\nabla v_{0m}||^2+ \frac{1}{p+1}I(v_{0m})+\frac{1}{(p+1)^2}||v_{0m}||_{p+1}^{p+1}>0.
		\end{aligned}
	\end{equation*}
	Thus from Theorem \ref{them1}, it follows  that for each $m$, problem \eqref{new_eq} admits a global solution $v_m  \in L^\infty (0,\infty; H^1_0({U}))$ with $\frac{\partial v_{m}}{\partial t} \in  L^2 (0,\infty; L^2({U}))$, and $v_{m}(\cdot,t)\in W$ for $0\leq t <\infty$. In other words, we have
	\begin{equation*}
		(\frac{\partial v_{m}}{\partial t} , v)_2+(\nabla v_m, \nabla v)_2=(v_m|v_m|^{p-1}\log|v_m|, v)_2, \ \textrm{for any}  \ v\in H^1_0({U}),\ t\in(0,T),
	\end{equation*}
	and
	\begin{equation}\label{new_approx}
		J(v_m(\cdot,t))+\int\limits_0^t ||\frac{\partial v_{m}}{\partial t}(\cdot,\tau) ||^2d \tau = J(v_m(\cdot,0))<\delta, \quad 0 \leq t<\infty.
	\end{equation}
	On the other hand, from  Theorem \ref{invariant}, we deduce that $I(v_m(\cdot,t))>0$. By following the arguments presented in Theorem \ref{them1}, one can easily prove that   \eqref{newapp} and \eqref{logu_int3} hold for each $m$, and there exist a function $v$ and a subsequence of $(v_m)$ which is still denoted by $(v_m)$, such that
	\begin{equation}
		\left\{
		\begin{aligned}
			&v_m \overset{w^*}{\rightharpoonup} v\ \textrm{in} \ L^\infty  (0, \infty ; H^1_0({U})) \ \textrm{and}\ \textrm{a.e.}\ \textrm{in}\  {U} \times [0,\infty),
			\\
			&\frac{\partial v_{m}}{\partial t} \overset{w^*}{\rightharpoonup} v_t\ \textrm{in} \ L^2(0,\infty ; L^2({U})),
			\\
			&v_m|v_m|^{p-1}\log |v_m| \overset{w^*}{\rightharpoonup} v|v|^{p-1}\log |v| \ \textrm{in} \ L^\infty  (0, \infty ; L^r({U})),
		\end{aligned}
		\right.
	\end{equation}
	as $m \to \infty$. Now it is straightforward to get that $v$ is indeed a global solution and $v(\cdot, t) \in \overline{W}$ for $0\leq t<\infty$. \\
	\textbf{Decay estimate}\\
	Assume that $v$ is a global solution to  \eqref{main} with $I(v_0) > 0$, $J(v_0)=\delta$,  then we get $I(v(\cdot,t)) \geq 0$ for $0 \leq t < \infty$. We complete the proof  by considering following two cases.\\
	\textbf{Case 1.}  Assume that $I(v(\cdot, t))>0$ for $0 \leq t <\infty$. Then from the relation $(v_t, v)=-I(v(\cdot, t))<0$, it follows that $||v_t||>0$ and $\int_0^t ||v_t||^2 d\tau$ is strictly increasing  in $[0,\infty)$. Therefore from  \eqref{energy_inequality}, we obtain
	\begin{equation}
		J(v(\cdot,t_1))=-\int_0^{t_1} ||v_t(\cdot, \tau)||^2 d \tau+J(v_0)<\delta.
	\end{equation}
	Using the arguments that are employed in the proof of the decay estimate in Theorem  \ref{them1}, we can obtain the exponential decay \eqref{decay_2nd}.\\
	\textbf{Case 2.} Let if possible there exists  $t_1>0$ such that $I(v(\cdot, t))>0$ for $0 \leq t< t_1$ and $I(v(\cdot,t_1))=0$. Now two possibilities can arise, they are: $(i)$ $|| \nabla v(\cdot,t_1)||=0$, $(ii)$ $|| \nabla v(\cdot,t_1)||> 0$.\\ We now prove that $||\nabla v(\cdot,t_1)||> 0$ can not hold. For, it is enough to show that if $|| \nabla v(\cdot,t_1)||> 0$ then $I(v(\cdot, t_1))>0$.\\
	\textbf{Claim.} If $||\nabla v(\cdot,t_1)|| >0$ then $I(v(\cdot,t_1))>0$.\\
	As $(v_t, v)=-I(v(\cdot, t))$ it follows that $t \mapsto \int_0^t ||v_t||^2 dt$ is strictly increasing, for $0\leq t<t_1$ .  Owing to \eqref{energy_inequality}, we obtain
	\begin{equation}\label{new12}
		J(v(\cdot,t_1))=-\int_0^{t_1} ||v_t(\cdot, \tau)||^2 d \tau+J(v_0)<\delta.
	\end{equation}
	Since $||\nabla v(\cdot,t_1)|| >0$ and $I(v(\cdot,t_1))=0$, from the definition of $\delta$ we have $J(v(\cdot,t_1)) \geq \delta$, which is contradiction to  \eqref{new12}. This proves Claim.\\
	Therefore we conclude $||\nabla v(\cdot, t_1)|| =0$. Hence one can easily deduce that $v$ satisfies   \eqref{decay_2nd}.
\end{proof}
%%%%%%%%%%%%%%%%%%%%%%%%%%%%%%%%%%%%%%%%%%%%%%%%%%
%%%%%%%%%%%%%%%%%%%%%%%%%%%%%%%%%%%%%%%%%%%%%%%%%%
As we have done in the subcritical case, we prove a result pertaining to the asymptotic behavior of  the solutions to \eqref{main} which is stronger than \eqref{decay_2nd} under an additional assumption on $p$.
\begin{proposition}\label{nabla_bound_1}
	Let  the power index $p$ of the source term satisfy \eqref{p}.
	Assume that $I(v_0)>0$ and $J(v_0)=\delta$,   then there exist two positive constants $K$ and $\gamma $  such that the weak solution to  \eqref{main} satisfies 
	\begin{equation}\label{decay_3}
		||\nabla v(\cdot,t)||\leq K e^{-\gamma t}, \quad 0 \leq t < \infty.
	\end{equation}
\end{proposition}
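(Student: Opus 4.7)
The plan is to reduce to Proposition \ref{nabla_bound} after shifting the time origin. By Theorem \ref{them3}, the hypotheses $I(v_0) > 0$ and $J(v_0) = \delta$ force one of two mutually exclusive scenarios: either $(a)$ $I(v(\cdot, t)) > 0$ for every $t \in [0, \infty)$; or $(b)$ there exists a first time $t_1 > 0$ with $I(v(\cdot, t_1)) = 0$, in which case $\|\nabla v(\cdot, t_1)\| = 0$.

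In scenario $(b)$, the Poincar\'e inequality forces $v(\cdot, t_1) \equiv 0$, and uniqueness of weak solutions (used implicitly in Theorem \ref{them3}) propagates this to $v(\cdot, t) \equiv 0$ for every $t \geq t_1$. On the compact interval $[0, t_1]$, combining the energy identity \eqref{energy_inequality} with \eqref{combo} supplies the uniform bound $\|\nabla v(\cdot, t)\|^2 \leq \frac{2(p+1)}{p-1}\delta$, so \eqref{decay_3} follows trivially upon choosing $K$ sufficiently large and any $\gamma > 0$.

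In scenario $(a)$, fix any $t_0 > 0$. Since $I(v_0) > 0$, the map $\tau \mapsto (v_t(\cdot, \tau), v(\cdot, \tau))_2 = -I(v(\cdot, \tau))$ cannot vanish identically on $(0, t_0)$: otherwise $v$ would be stationary there, and testing the equation against $v_0$ would yield $I(v_0) = 0$, a contradiction. Consequently $\int_0^{t_0} \|v_t(\cdot, \tau)\|^2 d\tau > 0$, and conservation of energy forces $J(v(\cdot, t_0)) < \delta$. Combined with $I(v(\cdot, t_0)) > 0$ (from scenario $(a)$), this places $v(\cdot, t_0)$ squarely into the hypotheses of Proposition \ref{nabla_bound}, applied with time origin shifted to $t_0$. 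That proposition yields constants $K_0 > 0$ and $\gamma > 0$ with $\|\nabla v(\cdot, t)\| \leq K_0 e^{-\gamma(t - t_0)}$ for $t \geq t_0$. On $[0, t_0]$ the same uniform $H_0^1$-bound as in scenario $(b)$ applies, and enlarging $K$ if necessary yields \eqref{decay_3} on all of $[0, \infty)$.

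The main technical point is the stationary-solution argument in scenario $(a)$ showing $v_t \not\equiv 0$ on any right neighborhood of $t = 0$; this is exactly where the strict assumption $I(v_0) > 0$ is essential and distinguishes this proposition from its weaker $I(v_0) \geq 0$ counterpart in Theorem \ref{them3}. A second, subtler point is justifying uniqueness in scenario $(b)$ so that $v \equiv 0$ persists past $t_1$; this is a standard consequence of $v_t \in L^2(0, T; L^2({U}))$ together with the energy identity \eqref{energy_inequality}, and was used tacitly in Theorem \ref{them3} as well.
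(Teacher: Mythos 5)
Your proposal is correct and follows essentially the same route as the paper, whose proof is a one-line instruction to combine the dichotomy from the decay-estimate part of Theorem \ref{them3} (either $I(v(\cdot,t))>0$ for all $t$, forcing $J(v(\cdot,t_0))<\delta$ at some $t_0>0$ and a reduction to the subcritical Proposition \ref{nabla_bound}, or a first time $t_1$ with $\|\nabla v(\cdot,t_1)\|=0$) with the arguments of Proposition \ref{nabla_bound}. You simply make explicit what the paper leaves tacit, including the uniqueness point in scenario $(b)$; the paper glosses over that as well.
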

\begin{proof}
	Using the arguments that are employed in the proof of the decay estimate in Theorem \ref{them3}, and the arguments used in Proposition \ref{nabla_bound}, we can  obtain the exponential decay \eqref{decay_3}.
\end{proof}
%%%%%%%%%%%%%%%%%%%%%%%%%%%%%%%%%%%%%%%%%%%%%%%%%%
%%%%%%%%%%%%%%%%%%%%%%%%%%%%%%%%%%%%%%%%%%%%%%%%%%%%%
\begin{theorem}\label{them4}
	Assume that  $I(v_0)<0$ and $J(v_0)=\delta$, then the weak solution to  \eqref{main} blows up in finite time i.e., there exists $T>0$ such that  $$\lim\limits_{t \to T^{-}} ||v(\cdot,t)||=\infty.$$
\end{theorem}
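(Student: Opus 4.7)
The plan is to reduce the critical case $J(v_0)=\delta$ to the subcritical blow-up result, Theorem \ref{them2}, by exploiting the fact that the strict sign condition $I(v_0)<0$ forces the energy of the solution to drop strictly below $\delta$ after an arbitrarily short positive time. Once that drop has occurred, the solution, restarted from the new initial time, satisfies the hypotheses of Theorem \ref{them2} and therefore blows up.

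To implement this, I would first pair \eqref{main} with $v$ and integrate over $U$ to obtain
\[
\tfrac{1}{2}\tfrac{d}{dt}\|v(\cdot,t)\|^{2} = (v_t,v)_2 = -I(v(\cdot,t)).
\]
Since $I(v_0)<0$, and since $t\mapsto I(v(\cdot,t))$ is continuous at $t=0$ (using $v\in L^\infty(0,T;H_0^1(U))$, $v_t\in L^2(0,T;L^2(U))$, and the control on the logarithmic nonlinearity already obtained in \eqref{logu_int3}), one finds $t_0>0$ such that $I(v(\cdot,t))<0$ on $[0,t_0]$. On this interval $t\mapsto\|v(\cdot,t)\|^2$ is strictly increasing, so $v_t\not\equiv 0$ there and $\int_0^{t_0}\|v_t(\cdot,\tau)\|^2\,d\tau>0$. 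The conservation of energy \eqref{energy_inequality} therefore yields
\[
J(v(\cdot,t_0))=\delta-\int_0^{t_0}\|v_t(\cdot,\tau)\|^2\,d\tau<\delta.
\]

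Because \eqref{main} is autonomous, $\tilde v(x,s):=v(x,t_0+s)$ is a weak solution of \eqref{main} with initial datum $\tilde v_0:=v(\cdot,t_0)$, and this new datum satisfies $I(\tilde v_0)<0$ and $J(\tilde v_0)<\delta$. Applying Theorem \ref{them2} produces a finite $T'>0$ such that $\lim_{s\to T'^-}\|\tilde v(\cdot,s)\|=\infty$, and $T:=t_0+T'$ is the blow-up time for $v$.

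The main obstacle is the initial-time continuity argument in the weak class. A cleaner alternative, which I would fall back on if the continuity is not comfortable to invoke, is to show directly by contradiction that $I(v(\cdot,t))<0$ persists on the entire existence interval: at a hypothetical first instant $t^*>0$ with $I(v(\cdot,t^*))=0$, either $\nabla v(\cdot,t^*)\not\equiv 0$ forces $v(\cdot,t^*)\in\mathcal N$ and hence $J(v(\cdot,t^*))\geq\delta$, contradicting the strict drop produced by $v_t\not\equiv 0$ on $[0,t^*)$; or $\nabla v(\cdot,t^*)=0$ contradicts the strict monotonicity of $t\mapsto\|v(\cdot,t)\|^2$ inherited from $\|v_0\|>0$. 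With $I(v(\cdot,t))<0$ established globally, the concavity argument of Theorem \ref{them2} (using $N(t)=\int_0^t\|v(\cdot,\tau)\|^2\,d\tau$ and the differential inequality $N\ddot N-\tfrac{p+1}{2}\dot N^{2}>0$ for large $t$) then runs verbatim to furnish the finite blow-up time.
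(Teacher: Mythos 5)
Your proposal is correct and follows essentially the same route as the paper: use continuity of $t\mapsto I(v(\cdot,t))$ to find a small $t_0>0$ with $I(v(\cdot,t))<0$ on $[0,t_0]$, deduce from $(v_t,v)_2=-I(v(\cdot,t))$ that $\int_0^{t_0}\|v_t\|^2\,d\tau>0$ so that energy conservation forces $J(v(\cdot,t_0))<\delta$, and then restart the (autonomous) problem at $t_0$ and invoke Theorem \ref{them2}. The extra fallback argument you sketch is not needed in the paper but is a reasonable safeguard; no gap.
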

\begin{proof}
	Suppose $v$ is a weak solution to  \eqref{main} with $I(v_0)<0$ and $J(v_0)=\delta$. Moreover, assume that  $T$ is the existence time of $v$. We have to show that $T <\infty$. From the continuity of  $I(v(\cdot, t))$ and  $J(v(\cdot, t))$ as functions of  $t$, it follows that there exists a sufficiently small $t_1 \in (0,T)$ such that $I(v(\cdot,t))<0$ and $J(v(\cdot,t_1))>0$ for $0\leq t \leq t_1$. Therefore $t \mapsto \int\limits_0^t ||v_t(\cdot,\tau)||^2 d \tau $ is strictly increasing for $0\leq t\leq t_1$. From the conservation of energy \eqref{energy_inequality}, we can choose $t_1$ such that  
	\begin{equation}
		0<J(v(\cdot,t_1))=-\int\limits_0^{t_1} ||v_t(\cdot, \tau)||^2 d \tau+J(v_0)<J(v_0)=\delta.
	\end{equation}
In view of Theorem \ref{them2}, one can easily get that the maximal existence time $T$ of $v$ is finite, $i.e.,$ 
$$\lim\limits_{t \to T^{-}} ||v(\cdot,t)||=\infty.$$
This completes the proof.
\end{proof}
%%%%%%%%%%%%%%%%%%%%%%%%%%%%%%%%%%%%%%%%%%%%%%%%%%
%%%%%%%%%%%%%%%%%%%%%%%%%%%%%%%%%%%%%%%%%%%%%%%%%%
\section{Finite time blowup}\label{sec:finite_time_blow}
In the previous section, we have proved that the weak solution exhibits finite time blow up when $I(v_0)<0$ in the subcritical case and the critical case.  In this section, we would like to prove a similar result when $J(v_0)>\delta$. However, when the $L^2$ norm of the initial data is  sufficiently larger than the initial potential energy then we observe finite time blow up irrespective of the magnitude of $J(v_0)$. Details are given in the following theorem.
\begin{theorem}
	Let the initial data $v_0$ satisfy\\
	$(i)$ $J(v_0)>\delta$,\\
	$(ii)$ $||v_0||^2>\frac{2(p+1)}{\lambda_1(p-1)} J(v_0)$,\\
	$(iii)$ $I(v_0)<0$,\\
	where $\lambda_1$ is the optimal constant in the Poincar\'e inequality. Then any weak solution to  \eqref{main} blows up in finite time.
\end{theorem}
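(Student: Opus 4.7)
The plan is to mimic the concavity/Levine-style argument from Theorem \ref{them2}, but to replace the invariance of $V_\rho$ (which required $J(v_0)<\delta$) with a direct continuity argument that uses hypothesis $(ii)$ as the new driver. Concretely, let $T\in(0,\infty]$ be the maximal existence time of a weak solution $v$, set $M(t)=\|v(\cdot,t)\|^2$, and define, as before,
\begin{equation*}
    N(t)=\int_0^t M(\tau)\,d\tau,\qquad \dot N(t)=M(t),\qquad \ddot N(t)=-2I(v(\cdot,t)).
\end{equation*}
The goal is to show $N\ddot N-\tfrac{p+1}{2}\dot N^2>0$ for all sufficiently large $t$, and then deduce blow-up in finite time from $(N^{-(p-1)/2})''<0$ exactly as in Theorem \ref{them2}.

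The first step, and the only genuinely new piece of the argument, is to propagate the sign $I(v(\cdot,t))<0$ for all $t\in[0,T)$. I would argue by contradiction: suppose $t_0\in(0,T)$ is the first time with $I(v(\cdot,t_0))=0$. On $[0,t_0)$ one has $\tfrac{d}{dt}M=-2I>0$, hence $M(t_0)\geq\|v_0\|^2$, which by hypothesis $(ii)$ gives $\lambda_1(p-1)M(t_0)>2(p+1)J(v_0)$. Using Poincaré this forces
\begin{equation*}
    \tfrac{p-1}{2(p+1)}\|\nabla v(\cdot,t_0)\|^2\geq \tfrac{p-1}{2(p+1)}\lambda_1 M(t_0)>J(v_0)\geq J(v(\cdot,t_0)),
\end{equation*}
where the last inequality comes from energy conservation \eqref{energy_inequality}. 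But identity \eqref{combo} together with $I(v(\cdot,t_0))=0$ gives the opposite bound $J(v(\cdot,t_0))\geq\tfrac{p-1}{2(p+1)}\|\nabla v(\cdot,t_0)\|^2$, a contradiction. Hence $I(v(\cdot,t))<0$, $M(t)>\|v_0\|^2$, and in particular $\lambda_1(p-1)\dot N(t)>2(p+1)J(v_0)$ on all of $[0,T)$.

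For the second step, I would just reuse the calculation leading to estimate \eqref{bode} in Theorem \ref{them2}; it uses only energy conservation, Poincaré and Hölder, and makes no reference to $J(v_0)<\delta$. This yields
\begin{equation*}
    N\ddot N-\tfrac{p+1}{2}\dot N^2\geq (p-1)\lambda_1 N\dot N-(p+1)\|v_0\|^2\dot N-2(p+1)J(v_0)N.
\end{equation*}
Regrouping, the right-hand side equals $N\bigl[(p-1)\lambda_1\dot N-2(p+1)J(v_0)\bigr]-(p+1)\|v_0\|^2\dot N$. By Step~1 the bracketed term is bounded below by the positive constant $\eta:=(p-1)\lambda_1\|v_0\|^2-2(p+1)J(v_0)$; moreover $N(t)\geq\|v_0\|^2 t\to\infty$, while $\dot N=M$ is monotone increasing and hence by the same Poincaré/energy comparison satisfies a linear lower bound $\dot N(t)\leq(p+1)\|v_0\|^2$ only for a bounded range of $t$. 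Concretely, for $t$ large enough one has $\eta N(t)>(p+1)\|v_0\|^2\dot N(t)$ (using only $N\geq\|v_0\|^2 t$ and either a uniform bound on $\dot N$ or, if $\dot N$ is already unbounded, noting that blow-up is essentially immediate), so $N\ddot N-\tfrac{p+1}{2}\dot N^2>0$ for $t\gg 1$. Then $(N^{-(p-1)/2})''<0$ and $(N^{-(p-1)/2})'<0$, so this positive concave function must reach $0$ in finite time, forcing $N(t)\to\infty$ at some finite $T$; since $M=\dot N$ is monotone increasing this in turn forces $\lim_{t\to T^-}\|v(\cdot,t)\|=\infty$.

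The main obstacle is really just the first step — showing that the set $\{I<0\}$ is invariant without access to the potential-well invariance Theorem \ref{invariant}. The delicate point is the strict inequality $M(t_0)>\|v_0\|^2$ at the hypothetical first zero $t_0$: one needs $I<0$ to hold on an \emph{open} initial interval, which follows from $I(v_0)<0$ (hypothesis $(iii)$) plus continuity. Once invariance is in hand the rest is a routine adaptation of the subcritical blow-up proof, with $(ii)$ playing exactly the role that $J(v_0)<\delta$ played there.
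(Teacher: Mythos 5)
Your Step 1 is correct and is essentially the paper's own argument, just rearranged: at a hypothetical first zero $t_0$ of $I(v(\cdot,t))$, monotonicity of $\|v(\cdot,t)\|^2$ together with hypothesis $(ii)$ and the Poincar\'e inequality forces $\frac{p-1}{2(p+1)}\|\nabla v(\cdot,t_0)\|^2>J(v_0)\ge J(v(\cdot,t_0))$, which contradicts the lower bound $J(v(\cdot,t_0))\ge \frac{p-1}{2(p+1)}\|\nabla v(\cdot,t_0)\|^2$ coming from \eqref{combo} with $I=0$. The paper states the same contradiction in terms of $\dot N(t_0)$ versus $\frac{2(p+1)}{\lambda_1(p-1)}J(v_0)$; the content is identical.

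The gap is in Step 2. After regrouping, you bound the bracket $(p-1)\lambda_1\dot N-2(p+1)J(v_0)$ below by the \emph{constant} $\eta$ and then need $\eta N(t)>(p+1)\|v_0\|^2\dot N(t)$ for large $t$. This does not follow from $N(t)\ge\|v_0\|^2 t$: near a prospective blow-up time $\dot N=\|v\|^2$ grows superlinearly, so its integral $N$ need not dominate any fixed multiple of $\dot N$ (if $\dot N\sim e^{ct}$ then $N\sim e^{ct}/c$ and the inequality fails for $c$ large). Your escape hatch --- ``if $\dot N$ is unbounded, blow-up is essentially immediate'' --- is also not valid: $\|v(\cdot,t)\|\to\infty$ as $t\to\infty$ is blow-up at infinity, not the finite-time blow-up the theorem asserts. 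The repair is to keep $\dot N$ as a factor in the dominant term instead of freezing it. Choose $\varepsilon>0$ with $(1-\varepsilon)(p-1)\lambda_1\|v_0\|^2>2(p+1)J(v_0)$, which is possible by the strict inequality in $(ii)$, and write
\[
(p-1)\lambda_1 N\dot N-(p+1)\|v_0\|^2\dot N-2(p+1)J(v_0)N
=\dot N\bigl[\varepsilon(p-1)\lambda_1 N-(p+1)\|v_0\|^2\bigr]+N\bigl[(1-\varepsilon)(p-1)\lambda_1\dot N-2(p+1)J(v_0)\bigr].
\]
The second bracket is positive for all $t>0$ because Step 1 gives $\dot N(t)\ge\|v_0\|^2$, and the first bracket is positive for $t$ large because $N(t)\ge\|v_0\|^2t\to\infty$. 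This two-sided use of the lower bounds on $\dot N$ and on $N$ is exactly what the paper's inequalities \eqref{mt0} and \eqref{mm} accomplish; with this substitution your argument closes and coincides with the paper's proof.
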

\begin{proof}
	As in the proof of Theorem \ref{them2}, we work with the quantity ${N}(t)=\int\limits_0^t ||v(\cdot,\tau)||^2 d \tau $. The following two steps are used to prove the theorem.\\
	\textbf{Step 1.} In this step, we show  $|| v(\cdot,t)||^2 > \frac{2(p+1)}{\lambda_1(p-1)} J(v_0)$ and $I(v(\cdot, t))<0$,  $t \in (0, T)$.\\
	Suppose there exists $t_0 \in (0,T)$ such that $I(v(\cdot,t))<0$ for $0\leq t <t_0$ and $I(v(\cdot,t_0)) =0$. From the definition it is clear that ${N}$ is increasing, and $\ddot{{N}}(t)= - 2I(v(\cdot, t))>0$ for $t\in [0,t_0)$. Therefore,  $\dot{{N}}$ is increasing in $[0,t_0]$. Thus we obtain
	\begin{equation}\label{mt0}
		\dot{{N}}(t_0)>\dot{{N}}(0)=||v_0||^2> \frac{2(p+1)}{\lambda_1(p-1)} J(v_0).
	\end{equation}
	Since $I(v(\cdot,t_0))=0$, the conservation of energy gives 
	\begin{equation*}
		\begin{aligned}
			J(v_0)&\geq J(v(\cdot,t_0))
			\\
			&=\frac{p-1}{2(p+1)}||\nabla v(\cdot,t_0)||^2+ \frac{1}{(p+1)^2}||v(\cdot,t_0)||_{p+1}^{p+1}
			\\
			&\geq \frac{p-1}{2(p+1)}||\nabla v(\cdot,t_0)||^2
			\\
			&\geq  \frac{ (p-1)\lambda_1}{2(p+1)}|| v(\cdot,t_0)||^2.
		\end{aligned}
	\end{equation*}
	Hence we get $\dot{{N}}(t_0)=||v (\cdot,t_0)||^2 \leq \frac{2(p+1)}{\lambda_1 (p-1)} J(v_0)$, which is a contradiction to \eqref{mt0}. Therefore we have
	\begin{equation*}
		I(v(\cdot, t))<0, \  t\in (0,T),
	\end{equation*}
	and \eqref{mt0} holds for every $t \in (0,T)$.
	%\begin{equation}\label{m}
	%	\dot{{N}}(t)> ||v_0||^2> \frac{4(p+1)}{\lambda_1 (p-1)} J(v_0),\  t\in (0,T),
	%\end{equation}
	Consequently, it implies that ${N}$ is strictly increasing. Therefore if $t$ is large enough, we get
	\begin{equation}\label{mm}
		{N}(t)>\frac{2(p+1)}{ (p-1)\lambda_1} ||v_0||^2.
	\end{equation}
	\textbf{Step 2.} In this step, for sufficiently large $t$, we prove  $\ddot{{N}}{N}-\frac{p+1}{2}\dot{{N}}^2>0$.\\
	For, from \eqref{bode}, \eqref{mt0} and \eqref{mm}, we deduce that
	\begin{equation}
		\begin{aligned}
			\ddot{{N}}{N}-\frac{p+1}{2}\dot{{N}}^2
			&\geq \lambda_1(p-1) {N} \dot{{N}}
			-2(p+1)J(v_0){N}-(p+1)||v_0||^2 \dot{{N}} >0,
		\end{aligned}
	\end{equation}
for sufficiently large $t$, proving Step 2.
	By considering ${N}^{-\frac{p-1}{2}}$ and using the convexity arguments that are  presented in the proof of Theorem \ref{them2}, it is straightforward to show the finite time blow up of the weak solution $v$.
\end{proof}
\section*{Conclusions}
We have proved the existence of  global  solutions to the initial value problem of a semi-linear heat equation  \eqref{main} without having any restriction on $p$ and the dimension at two different energy levels (subcritical and critical) provided $I(v_0) > 0$. Moreover, an exponential decay estimate on $L^2$ - norm of the global solutions has been obtained  for all $p>1$ and $n \in \mathbb{N}$.  We have also estimated the $H^1_0$ - norm of the solution under the condition $1<p<\frac{n+2}{n-2}, \ \text{if } n>2$.
On the other hand, we have established that if $I(v_0)<0$, then any solution exhibits the finite time blow property at subcritical and critical initial energy levels. Besides we have proved that solution blows up in finite time, provided $|| v_0||$ is sufficiently larger than $J(v_0) > \delta$, and $I(v_0)<0$. It is an interesting problem to prove that any solution exhibits the finite time blow property without the condition $|| v_0|| >> J(v_0)>0$ in the supercritical case ($J(v_0) >\delta$). Moreover, investigation of the global existence in the super critical case is also an interesting problem.
\section*{Acknowledgements}
\noindent The first author would like to thank CSIR (award number: 09/414(1154)/2017-EMR-I) for providing the financial support for his research.
The second author is supported by Department of Science and Technology, India, under MATRICS (MTR/2019/000848).

%%%%%%%%%%%%%%%%%%%%%%%%%%%%%%%%%%%%%%%%%%%%%%%%%%%
\bibliographystyle{abbrv}
\bibliography{references}
\end{document}